\documentclass[12pt]{amsart}
\usepackage{amsmath}
\usepackage{amsthm}
\usepackage{amssymb}
\usepackage{amscd}
\usepackage{hyperref}
\usepackage{color}

\setlength{\textwidth}{6.5in} \setlength{\textheight}{8.5 in}
\setlength{\oddsidemargin}{0.0 in}
\setlength{\evensidemargin}{\oddsidemargin}
\hfuzz2pt 
\vfuzz1.5pt


\newcommand{\RR}{\mathbb{R}}

\newcommand{\hght}{\operatorname{ht}}
\newcommand{\Spec}{\operatorname{Spec}}

\newcommand{\length}{\ell}
\newcommand{\pf}{F_*}

\newcommand{\Supp}{\operatorname{Supp}}

\newcommand{\eh}{\operatorname{e}}
\newcommand{\ehk}{\eh_{HK}}
 
\newcommand{\mf}{\mathfrak}
\newcommand{\frq}[1]{{#1}^{[q]}}
\newcommand{\ul}[1]{\underline{#1}}

\renewcommand{\hat}{\widehat}

\newcommand{\gr}{\operatorname{gr}}

\newtheorem{theorem}{Theorem}
\newtheorem{lemma}[theorem]{Lemma}
\newtheorem{proposition}[theorem]{Proposition}
\newtheorem{corollary}[theorem]{Corollary}

\newtheorem{conjecture}[theorem]{Conjecture}
\newtheorem*{statement*}{Statement}
\newtheorem*{theorem*}{Theorem}
\newtheorem*{lemma*}{Lemma}
\newtheorem*{fact*}{Fact}

\theoremstyle{definition}
\newtheorem{definition}[theorem]{Definition}
\newtheorem*{definition*}{Definition}
\newtheorem{example}[theorem]{Example}
\newtheorem*{example*}{Example}

\theoremstyle{remark}
\newtheorem{remark}[theorem]{Remark}
\newtheorem*{remark*}{Remark}

\begin{document}
\title[Upper semi-continuity of the Hilbert-Kunz multiplicity]
{Upper semi-continuity of the Hilbert-Kunz multiplicity}

\author{Ilya Smirnov}
\address{Department of Mathematics\\
University of Virginia\\
 Charlottesville, VA 22904-4137 USA}
\email{is6eu@virginia.edu}

\date{\today}

\begin{abstract}
We prove that the Hilbert-Kunz multiplicity is upper semi-continuous in F-finite rings
and algebras of essentially finite type over an excellent local ring.
\end{abstract}

\keywords{Hilbert-Kunz multiplicity, semi-continuity, F-finite rings}
\subjclass[2010]{13D40, 13A35, 13F40}

\maketitle
\tableofcontents

\section{Introduction}\label{intro}
Let $R$ be a commutative Noetherian ring of characteristic $p > 0$.
For a prime ideal $\mf p$, the (normalized) Hilbert-Kunz function of $\mf p$ is defined to be 
\[ e \mapsto \frac{\length_{R_\mf p} (R_\mf p/\mf p^{[p^e]}R_\mf p)}{p^{ed}},\]
where $e$ is a positive integer and $\mf p^{[p^e]}$ is the ideal generated by the $p^e$th powers of elements of $\mf p$.
The Hilbert-Kunz multiplicity of $\mf p$ is defined as the limit of this sequence.
Hilbert-Kunz theory originates in the work of Kunz (\cite{Kunz1, Kunz2}) 
and Hilbert-Kunz multiplicity was introduced by Monsky (\cite{Monsky}) in 1983.  

From the beginning, there was a perception that Hilbert-Kunz theory should be a measure of singularities.
In fact, in 1969 Kunz has shown that Hilbert-Kunz function characterizes singularity,
and then, in 2000, Watanabe and Yoshida generalized this for Hilbert-Kunz multiplicity.
In \cite{WatanabeYoshida} they showed that an unmixed local ring $(R, \mf m)$ is regular if and only if $\ehk(\mf m) = 1$.

Since it follows from the work of Kunz that $\ehk(R) \geq 1$, one may expect that when 
$\ehk(R)$ is getting close to $R$, the singularity of $R$ is getting better.
A notable example of this was given by Blickle and Enescu in \cite{BlickleEnescu}
and then improved by Aberbach and Enescu in \cite{AberbachEnescu}.
They show that if $\ehk(R)$ is sufficiently close to $1$, then $R$ has to be Gorenstein and F-regular.

This work is devoted to a global property of Hilbert-Kunz multiplicity: upper semi-continuity. 
A formal definition of upper semi-continuity is given in Definition~\ref{upsc}, but via Nagata's criterion 
(Proposition~\ref{nagcrit}) this question can be considered as a distribution property of singularities of local rings of $R$.
Namely, given a prime ideal $\mf p$ of $R$, we want to know if the Hilbert-Kunz multiplicity of 
a generic prime containing $\mf p$ is close to the Hilbert-Kunz multiplicity of $R_\mf p$, see Proposition~\ref{closesing}. 
So, we may say, that we are trying to understand if Hilbert-Kunz multiplicity sees that
the singularity of a general prime containing $\mf p$ is close to that of $\mf p$.

We also want to bring reader's attention to Example~\ref{badhk},
which shows that the Hilbert-Kunz multiplicity of a general prime containing $\mf p$
need not to be equal to $\ehk(\mf p)$. This illustrates the subtlety of the problem.
Also, note that the corresponding statement for the Hilbert-Samuel multiplicity is true.

In 1976 (\cite{Kunz2}) Kunz proved that for a fixed $e$ the Hilbert-Kunz function 
$\length(R_\mf p/\mf p^{[p^e]}R_\mf p)/p^{ed}$ is upper semi-continuous, 
but his methods do not extend for the limit. 
In \cite{EnescuShimomoto} Enescu and Shimomoto asked whether Hilbert-Kunz multiplicity is upper semi-continuous.
Also they proved that Hilbert-Kunz multiplicity is dense upper semi-continuous on the maximum spectrum, 
which is a weaker condition. 
Later, in 2011, there was a group working on this question in the AIM workshop "Relating test ideals and multiplier ideals".  

We prove that Hilbert-Kunz multiplicity is upper semi-continuous in locally equidimensional F-finite rings and 
locally equidimensional rings of essentially finite type over an excellent local ring, 
a mild restriction that is satisfied by complete local domains and domains finitely generated over a field.
To achieve this, we want to control the convergence rate of the Hilbert-Kunz function and, 
building on Tucker's estimates from \cite{Tucker}, 
we show that it can be controlled generically in Theorem~\ref{funiconv} for F-finite domains and in Theorem~\ref{essuniconv}
for algebras of essentially finite type over a complete local domain.
This method allows us to reduce upper semi-continuity of the limit of a sequence (Hilbert-Kunz multiplicity)  
to upper semi-continuity of a term of the sequence (a fixed Hilbert-Kunz function), 
and the latter is known by the work of Kunz.
This strategy should be useful for other numerical invariants in positive characteristic, 
for example for F-signature. 
However, an application  of this approach seems to require a better understanding of F-signature, 
see Remark~\ref{F-signature remark}.

The structure of the proof is as following. We start with general preliminaries in Section~\ref{prelim}. 
Then we develop the machinery of global convergence estimates in Sections~\ref{ffinite estimate} and \ref{ess estimate}.
Section~\ref{ffinite estimate} treats F-finite case and Section~\ref{ess estimate} obtains the same result 
for algebras of essentially finite type over a complete local domain, so a reader interested only in F-finite case 
may skip it. Then the estimates are used in Section~\ref{final} to prove the main theorem.

\section{Preliminaries}\label{prelim}
In this paper all rings assumed to be commutative Noetherian and containing an identity element. 
For a module $M$ over a ring $R$, we will use $\length (M)$ to denote the length of $M$. 

Let $R$ be a ring of characteristic $p > 0$. For convenience, we use $q = p^e$ where $e$ may vary.
For an ideal $I$ of $R$, let $\frq{I}$ be the ideal generated by $q$th powers of the elements of $I$.
By $\pf R$ we mean $R$ viewed as an $R$-module via the extension of scalars through the Frobenius endomorphism.
If $R$ is reduced, $\pf R$ can be identified with the ring of $p$-roots $R^{1/p}$.
We say that $R$ is F-finite if $\pf R$ is a finitely generated $R$-module.

\begin{definition}\label{upsc}
Let $X$ be a topological space.
A real-valued function $f$ is upper semi-continuous if for any $a \in \RR$
the set $\{x \in X\mid f(x) < a\}$ is open in $X$.
\end{definition}

In his papers \cite{Kunz1} and \cite{Kunz2}, Kunz initiated the study of the Hilbert-Kunz function $f_q(R) = \frac{1}{q^d} \length(R/\frq{\mf m})$.
For any $q$, we can define a function on $\Spec R$, the spectrum of $R$, by setting 
$f_q(\mf p) = f_q(R_\mf p)$.
In \cite[Proposition~3.3, Corollary~3.4]{Kunz2} Kunz obtained the following results.

\begin{theorem}\label{qupsemi}
If $R$ is a locally equidimensional ring, then for all $q$
\begin{enumerate}
\item $f_q(\mf p) \leq f_q(\mf q)$, if $\mf p \subseteq \mf q$,
\item $f_q(\mf p)$ is upper semi-continuous on $\Spec R$.
\end{enumerate}
\end{theorem}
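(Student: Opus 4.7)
The plan is to prove (1) first and then derive (2) by applying Nagata's criterion for upper semi-continuity on the Noetherian spectral space $\Spec R$.

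For (1), since localizations of locally equidimensional rings remain locally equidimensional, I reduce to the case $(R, \mf q)$ local equidimensional of dimension $d$ with $\mf p \subseteq \mf q$ of height $h$. Setting $s = d - h$, the desired inequality becomes
\[ \length(R/\frq{\mf q}) \geq q^{s}\,\length_{R_\mf p}(R_\mf p/\frq{\mf p} R_\mf p). \]
I would induct on $s$; the case $s=0$ is vacuous. For the inductive step, equidimensionality provides a prime $\mf p'$ with $\mf p \subsetneq \mf p' \subseteq \mf q$, $\hght \mf p' = h+1$, and $\dim(R/\mf p') = s - 1$; induction applied to the pair $(\mf p', \mf q)$ reduces the problem to the codimension-one statement
\[ \length_{R_{\mf p'}}(R_{\mf p'}/\frq{\mf p'} R_{\mf p'}) \geq q\,\length_{R_\mf p}(R_\mf p/\frq{\mf p} R_\mf p). \]
For this I would pick $x \in \mf p' \setminus \mf p$ whose class in the one-dimensional local domain $R_{\mf p'}/\mf p R_{\mf p'}$ is a system of parameters, and apply the Hilbert-Samuel associativity formula to the one-dimensional module $R_{\mf p'}/\frq{\mf p} R_{\mf p'}$, whose unique minimal prime is $\mf p$ (since $\sqrt{\frq{\mf p}} = \mf p$). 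Combining the identity $e(x^q,\cdot) = q\cdot e(x,\cdot)$ with a careful comparison of $\frq{\mf p}+(x^q)$ and $\frq{\mf p'}$ inside $R_{\mf p'}$ should deliver the required factor of $q$.

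For (2), Nagata's criterion reduces upper semi-continuity on $\Spec R$ to two properties: (a) the specialization property $\mf p \subseteq \mf q \Rightarrow f_q(\mf p) \leq f_q(\mf q)$, which is exactly (1), and (b) the condition that for each $\mf p$, $f_q$ attains its maximum on a dense open subset of $V(\mf p)$. To verify (b), I would exploit that at fixed $q$ the function $\mf q \mapsto \length(R_\mf q/\frq{\mf q} R_\mf q)$ is constructible on $\Spec R$ (it is assembled from lengths of finitely presented modules) and that $\hght \mf q$ is locally constant on an appropriate stratification by equidimensionality. Together these force generic constancy of $f_q$ on $V(\mf p)$, which combined with (1) yields (b).

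The main obstacle is the codimension-one inequality in (1). One must simultaneously control the possibly embedded associated primes of $\frq{\mf p}$ lying over $\mf p'$ and the mismatch between $\frq{\mf p}+(x^q)$ and $\frq{\mf p'}$; equidimensionality is precisely the hypothesis that makes the Hilbert-Samuel associativity argument propagate cleanly to the $\frq{\mf p'}$-primary situation, and without it one can construct counterexamples already at codimension one. The constructibility step of (2) is more routine once (1) is in hand.
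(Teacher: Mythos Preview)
The paper does not prove this statement; it is quoted from Kunz \cite[Proposition~3.3, Corollary~3.4]{Kunz2}, so there is no in-paper argument to compare against directly.

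Your outline has a genuine gap in the codimension-one step of (1). The only containment you have between your auxiliary ideal and $\frq{\mf p'}$ is $\frq{\mf p}+(x^q)\subseteq\frq{\mf p'}$ (since $\mf p\subseteq\mf p'$ and $x\in\mf p'$), which gives
\[
\length_{R_{\mf p'}}\!\big(R_{\mf p'}/\frq{\mf p'}\big)\ \le\ \length_{R_{\mf p'}}\!\big(R_{\mf p'}/(\frq{\mf p}+(x^q))\big).
\]
Your associativity computation correctly bounds the right-hand side \emph{below} by $q\,\length_{R_\mf p}(R_\mf p/\frq{\mf p})$, but these two inequalities point in opposite directions and do not chain to give a lower bound on $\length_{R_{\mf p'}}(R_{\mf p'}/\frq{\mf p'})$. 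Forcing equality $\frq{\mf p'}R_{\mf p'}=(\frq{\mf p}+(x^q))R_{\mf p'}$ would require $\mf p' R_{\mf p'}=(\mf p,x)R_{\mf p'}$, i.e.\ that the one-dimensional domain $R_{\mf p'}/\mf p R_{\mf p'}$ be a DVR, and local equidimensionality does not guarantee this (nor can you always thread a saturated chain of primes with this property between $\mf p$ and $\mf q$). The phrase ``a careful comparison \ldots\ should deliver the required factor of $q$'' is exactly where a new idea is required and none is supplied; in fact for a general $\mf m$-primary ideal $J$ in dimension one the inequality $\length(M/JM)\ge\eh(J,M)$ you would need is false (take $R=k[[t^2,t^3]]$, $J=\mf m$).

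For orientation, in the F-finite case used throughout the paper there is a short argument that avoids parameter-ideal comparisons entirely: one checks that $q^{\alpha(\mf q)}\,\length_{R_\mf q}(R_\mf q/\frq{\mf q})=\dim_{k(\mf q)}\big(F^e_*R\otimes_R k(\mf q)\big)$, and Proposition~\ref{kunzdegree} makes $\hght\mf q+\alpha(\mf q)$ constant along any specialization in a locally equidimensional F-finite ring. Hence $f_q(\mf q)$ is, up to a locally constant factor, the fiber dimension of the finite $R$-module $F^e_*R$, and both (1) and (2) follow from Nakayama's lemma. Kunz's original argument for arbitrary Noetherian rings in \cite{Kunz2} is different again, but it is not the Hilbert--Samuel comparison you sketch.

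Your plan for (2) is reasonable once (1) is secured; the generic-constancy step is essentially what the paper records as Proposition~\ref{locconstant} and uses in the proof of Theorem~\ref{mainthm}.
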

We should note, that Kunz claimed this result for an equidimensional ring, but
Shepherd-Barron pointed out in \cite{ShepherdBarron} that the theorem is false if $R$ is not locally equidimensional.

\begin{definition}
Let $(R, \mf m)$ be a local ring of characteristic $p > 0$.
It was shown by Monsky that the limit 
\[
\ehk(R) = \lim_{q \to \infty} \frac{\length \left (R/\frq{\mf m} \right)}{q^{\dim R}}
\]
exists and is called the Hilbert-Kunz multiplicity of $R$.
\end{definition}

Defining $\ehk(\mf p) := \ehk(R_\mf p)$, we can view the Hilbert-Kunz multiplicity as a function on $\Spec R$.
In view of Kunz's result, it was natural to pose the following conjecture.

\begin{conjecture}\label{semi}
If $R$ is a locally equidimensional excellent ring,
then the Hilbert-Kunz multiplicity is an upper semi-continuous function on $\Spec R$.

Or, less generally, 
if  $R$ be a locally equidimensional F-finite ring,
then the Hilbert-Kunz multiplicity is an upper semi-continuous function on $\Spec R$.
\end{conjecture}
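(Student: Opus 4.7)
The plan is to deduce upper semi-continuity of $\ehk$ from Kunz's upper semi-continuity of the finite-level functions $f_q$ (Theorem~\ref{qupsemi}) by interposing a generic uniform convergence estimate, reducing a statement about the limit to a statement about a single term of the sequence via Nagata's criterion (Proposition~\ref{nagcrit}).

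First, I would record that $\ehk$ is monotone non-decreasing along specialization: Theorem~\ref{qupsemi}(1) gives $f_q(\mf p) \leq f_q(\mf q)$ for every $q$ whenever $\mf p \subseteq \mf q$, and passing to the limit yields $\ehk(\mf p) \leq \ehk(\mf q)$, so every set $\{\ehk \geq a\}$ is stable under specialization --- one of the two hypotheses in Nagata's criterion.

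For the remaining local condition, fix a prime $\mf p$ with $\ehk(\mf p) < a$ and pick $\epsilon > 0$ so that $\ehk(\mf p) + 2\epsilon < a$. Applying the generic uniform convergence estimate (Theorem~\ref{funiconv} in the F-finite case, Theorem~\ref{essuniconv} in the essentially finite type case), I would produce a dense open $W$ of $V(\mf p)$ and an integer $q$ with $|f_q(\mf r) - \ehk(\mf r)| < \epsilon$ for every $\mf r \in W$. Since $\mf p$ is the generic point of $V(\mf p)$, $\mf p \in W$, so $f_q(\mf p) < a - \epsilon$. By Theorem~\ref{qupsemi}(2) the set $\{f_q < a - \epsilon\}$ is open in $\Spec R$, and its intersection with $W$ is a non-empty open subset of $V(\mf p)$ on which $\ehk \leq f_q + \epsilon < a$. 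Combined with the monotonicity above, Nagata's criterion then forces $\{\ehk \geq a\}$ to be closed for every $a$, which is exactly upper semi-continuity of $\ehk$.

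The main obstacle is the generic uniform convergence itself, i.e., Theorems~\ref{funiconv} and \ref{essuniconv}. Tucker's pointwise bounds on $|f_q(R) - \ehk(R)|$ are expressed in terms of Frobenius data such as the number of generators of $\pf R$ as an $R$-module, and the real work is to globalize these estimates so that the implicit constants can be taken uniform over a Zariski open neighborhood of a given prime. The essentially finite type situation carries the additional complication of reducing to the F-finite case by passage to a completion. Once these uniform estimates are available, the Nagata-type argument above is essentially formal.
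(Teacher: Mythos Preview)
Your plan is essentially the paper's own strategy: reduce semi-continuity of the limit to semi-continuity of a single $f_q$ via a generic uniform convergence estimate, then apply Nagata's criterion. Two remarks are in order.

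First, a small gap: Theorems~\ref{funiconv} and~\ref{essuniconv} are stated for \emph{domains}, and in the essentially finite type case $B$ must be \emph{complete}. You invoke them directly for an arbitrary locally equidimensional $R$. The paper first uses Lemma~\ref{regdescent} to pass from an excellent local $B$ to its completion $\hat B$ (the fibers of $B\to\hat B$ are regular, so semi-continuity descends), and then Proposition~\ref{godomain} to reduce to the case where $R$ is a domain. Both reductions are routine, but you should insert them before appealing to the uniform convergence theorems.

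Second, a cosmetic difference: where you use upper semi-continuity of $f_q$ (Theorem~\ref{qupsemi}(2)) to find an open set on which $f_q<a-\varepsilon$, the paper uses the stronger local constancy statement (Proposition~\ref{locconstant}) to find an open subset of $V(\mf p)$ on which $f_{q_0}$ is literally equal to $f_{q_0}(\mf p)$. Your version is perfectly adequate and arguably cleaner; either input suffices for the argument.
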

Note that F-finite rings are excellent by a theorem of Kunz (\cite{Kunz2}[Theorem 2.5]).

Also, we want to note that the restriction of the second statement in the conjecture is 
somehow natural, since F-finite rings are much easier to work with. 
For example, it is still not known whether all excellent rings have a test element.

\begin{remark}
The reader should be warned that Shepherd-Barron (in \cite{ShepherdBarron}[Corollary 2]) 
claimed a much stronger statement. 
However, in his proof, he used that a descending sequence of open sets stabilizes without a proper justification.
In fact, Shepherd-Barron's claim implies that $\ehk$ attains only finitely many values on $\Spec R$. 
But the author was able to give a counter-example to this claim, see Example~\ref{badhk}.
\end{remark}

It is also worth pointing out that a stronger property holds for Hilbert-Kunz functions.
The proofs of semi-continuity by Kunz (\cite{Kunz2}) and Shepherd-Barron (\cite{ShepherdBarron}) show the following statement.

\begin{proposition}\label{locconstant}
Let $R$ be a locally equidimensional ring and $\mf p$ be a prime ideal.
Then for any fixed $q$ and any $a \in \RR$ the set
\[
\{\mf p\mid f_q(\mf p) \leq a\}
\]
is open in $\Spec R$.
\end{proposition}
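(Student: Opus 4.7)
The plan is to deduce this from the upper semi-continuity of $f_q$ established in Theorem~\ref{qupsemi}(2), together with the observation that $f_q$ takes values in a discrete subset of $\RR$. First, I would note that for every prime $\mf p \in \Spec R$, the value $f_q(\mf p) = \length_{R_\mf p}(R_\mf p/\mf p^{[q]}R_\mf p)/q^{\hght \mf p}$ is a nonnegative integer divided by $q^{\hght \mf p}$. Writing $d = \dim R$ and using $\hght \mf p \leq d$, every value of $f_q$ is an element of $\frac{1}{q^d}\mathbb{Z}_{\geq 0}$, a set whose consecutive elements are separated by $1/q^d$.

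Given $a \in \RR$, I would then set $a' := (\lfloor q^d a \rfloor + 1)/q^d$, so that $a' > a$ and the open interval $(a, a')$ contains no element of $\frac{1}{q^d}\mathbb{Z}$. Hence
\[
\{\mf p \in \Spec R : f_q(\mf p) \leq a\} = \{\mf p \in \Spec R : f_q(\mf p) < a'\},
\]
and the right-hand side is open in $\Spec R$ by Theorem~\ref{qupsemi}(2).

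There is no genuine obstacle here once Theorem~\ref{qupsemi} is in hand; the only content beyond that theorem is the discreteness of the image of $f_q$, which is immediate from the formula $\length/q^{\hght}$ together with $\dim R < \infty$ (as is the case for all rings relevant to this paper). This is consistent with the excerpt's assertion that the proofs of Kunz and Shepherd-Barron already deliver the stronger openness statement: their arguments analyze $f_q$ through this same explicit length formula, so the discreteness is visible in their proofs, and the strengthening from $<$ to $\leq$ comes for free.
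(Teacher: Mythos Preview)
Your argument is correct. The paper does not actually supply a proof of Proposition~\ref{locconstant}; it simply asserts that the \emph{proofs} of upper semi-continuity in Kunz and Shepherd--Barron already yield the stronger $\leq$-statement. The flavor of that direct argument is visible later, inside the proof of Theorem~\ref{mainthm}: one passes to an open set where $R/\mf p$ is regular and $R/\mf p^{[q_0]}$ is Cohen--Macaulay, and then a multiplicity computation shows $f_{q_0}$ is literally constant there. So the paper's route is to re-run the Kunz construction and observe that it gives local constancy, not merely semi-continuity.

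Your route is genuinely different and cleaner: you take Theorem~\ref{qupsemi}(2) as a black box and bootstrap from $<$ to $\leq$ via the discreteness of the value set $\tfrac{1}{q^d}\mathbb Z_{\geq 0}$. The trade-off is the hypothesis $\dim R < \infty$, which you flag explicitly. That hypothesis is harmless in this paper---F-finite rings have finite Krull dimension (immediate from Proposition~\ref{kunzdegree}, since $\hght \mf q \leq \alpha(0)$ in a domain), and algebras essentially of finite type over a local ring are finite-dimensional as well---but the Kunz--Shepherd-Barron argument does not need it. So your proof buys brevity at the cost of a mild extra assumption that happens to be free here.
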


\begin{example}\label{badhk}
Let $R = F[x, y, z, t]/(z^4 +xyz^2 + (x^3 + y^3)z + tx^2y^2)$, where 
$F$ is the algebraic closure of $\mathbb Z/2\mathbb Z$.
In \cite{BrennerMonsky}, Brenner and Monsky showed that tight closure does not commute with localization in  
this ring. 

Let $\mf p = (x, y, z)$, this is  a prime ideal of dimension one in $R$. 
Building on the work of Monsky (\cite{MonskyQP}), the author was able to show
that the Hilbert-Kunz multiplicity attains infinitely many values on the set of prime ideals containing $\mf p$. 
Moreover, for any prime ideal $\mf m$ containing $\mf p$,
$\ehk(\mf m) > \ehk(\mf p)$, so the set $\{\mf q\mid \ehk(\mf q) \leq \ehk(\mf p)\}$ is not open.
The details will appear in a future paper.

This rather surprising result shows that, compared to the Hilbert-Samuel multiplicity and a fixed Hilbert-Kunz function,
the Hilbert-Kunz multiplicity has a distinctively different global behavior. 
\end{example}

We use the following standard terminology: 
a closed set $V(I)$ consists of all prime ideals containing $I \subseteq R$,
a principal open set $D_s$ consists of all prime ideals not containing $s \in R$. 
Since $D_s$ can be naturally identified with $\Spec R_s$, 
we will sometimes abuse notation, and, by saying to invert an element $s$, 
we will mean to consider $D_s$.

We recall Nagata's criterion of openness in $\Spec R$ (\cite[22.B]{Matsumura}).
\begin{proposition}\label{nagcrit}
Let $R$ be a ring. 
A subset $U$ of $\Spec R$ is open if and only if
\begin{enumerate}
\item $U$ is stable under generalization, i.e. if $\mf q \in U$ and $\mf p \subseteq \mf q$, then $\mf p \in U$, 
\item $U$ contains a nonempty open subset of $V(\mf p)$ for any $\mf p \in U$.
\end{enumerate}
\end{proposition}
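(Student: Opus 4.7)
The plan is to handle the two directions separately. The forward direction, that $U$ open implies (1) and (2), is straightforward: writing $U$ as a union of principal opens $D_s$, condition (1) follows because $s \notin \mf q$ and $\mf p \subseteq \mf q$ force $s \notin \mf p$; condition (2) is immediate since $U \cap V(\mf p)$ is already an open subset of $V(\mf p)$ containing $\mf p$ and lying in $U$.

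The content of the proposition is the converse, which I would prove by contradiction using Noetherian-ness of $\Spec R$. Set $Z = \Spec R \setminus U$. Condition (1) translates into $Z$ being stable under specialization: if $\mf p \in Z$ and $\mf p \subseteq \mf q$, then $\mf q \in Z$, for otherwise $\mf q \in U$ would force $\mf p \in U$ by (1). Let $\bar Z$ be the closure of $Z$. Since $R$ is Noetherian, $\bar Z = V(\mf p_1) \cup \cdots \cup V(\mf p_n)$ for finitely many generic points $\mf p_i$ of its irreducible components. If every $\mf p_i$ lies in $Z$, then specialization-closedness gives $V(\mf p_i) \subseteq Z$ for each $i$, so $\bar Z \subseteq Z$ and $U$ is open. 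The remaining case is that some $\mf p_i$ lies in $U$.

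In that case I would apply (2) at $\mf p_i$ to obtain a nonempty $W \subseteq U$ open in $V(\mf p_i)$, writing $W = V(\mf p_i) \cap V$ for some open $V \subseteq \Spec R$. Set $O_i = \bar Z \setminus \bigcup_{j \ne i} V(\mf p_j)$, which is open in $\bar Z$ and equals $V(\mf p_i) \setminus \bigcup_{j \ne i} V(\mf p_j)$. The intersection $W \cap O_i = V \cap O_i$ is then open in $\bar Z$, and it is nonempty because both $W$ and $O_i$ are nonempty opens in the irreducible space $V(\mf p_i)$. By construction $W \cap O_i \subseteq U$, yet $Z$ is dense in $\bar Z$, so $W \cap O_i$ must meet $Z$---a contradiction.

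The main obstacle is precisely this final intersection: the open $W$ furnished by (2) lives inside the closed stratum $V(\mf p_i) \subseteq \bar Z$ and is not itself open in $\bar Z$, so density of $Z$ in $\bar Z$ cannot be used directly. Carving out the generic stratum $O_i$ of $V(\mf p_i)$ (the complement of the other irreducible components of $\bar Z$) is the move that upgrades $W$ to an open subset of $\bar Z$, while irreducibility of $V(\mf p_i)$ keeps the intersection nonempty. The rest is Noetherian bookkeeping.
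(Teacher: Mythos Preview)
The paper does not supply its own proof of this proposition; it is simply recalled as Nagata's criterion with a reference to \cite[22.B]{Matsumura}, so there is no argument in the paper to compare against.

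Your proof is correct. The forward direction is immediate, and for the converse you use Noetherianness to pass to the finitely many generic points $\mf p_i$ of $\overline{Z}$, then---in the only nontrivial case, where some $\mf p_i$ lies in $U$---intersect the open $W \subseteq V(\mf p_i)$ provided by (2) with the generic stratum $O_i = \overline{Z} \setminus \bigcup_{j\neq i} V(\mf p_j)$ to promote it to a nonempty open subset of $\overline{Z}$ contained in $U$, contradicting density of $Z$ in $\overline{Z}$. This is essentially the standard argument one finds in Matsumura.
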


Since any open set is a union of principal open sets, the second condition is equivalent to 
$U \supseteq V(\mf p) \cap D_s \neq \emptyset$ for some $s$.

From Theorem~\ref{qupsemi} it follows that if $\mf p \subseteq \mf q$ then
$\ehk(\mf p) \leq \ehk(\mf q)$, so for any $a$ 
the set $\{\mf p \in \Spec R\mid \ehk(\mf p) < a\}$ is stable under generalization.
Hence, it is enough to verify only the second condition of the criterion.
Thus we can restate Conjecture~\ref{semi} in the following form.

\begin{proposition}\label{closesing}
Let $R$ be a locally equidimensional ring.
Then the Hilbert-Kunz multiplicity is upper semi-continuous on $\Spec R$ if and only if 
for any prime ideal $\mf p$ and any $\varepsilon > 0$ 
there exists $s \notin \mf p$ such that for all prime ideals $\mf q \in D_s \cap V(\mf p)$ 
\[\ehk (\mf q) <  \ehk(\mf p) + \varepsilon.\]
\end{proposition}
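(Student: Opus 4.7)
The plan is to derive this directly from Nagata's criterion (Proposition~\ref{nagcrit}), since the proposition is really just a restatement of upper semi-continuity adapted to the specific topology of $\Spec R$, using the monotonicity of $\ehk$ under specialization as the missing ingredient.

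For the forward direction, I assume $\ehk$ is upper semi-continuous. Given $\mf p$ and $\varepsilon > 0$, the set $U = \{\mf q \in \Spec R \mid \ehk(\mf q) < \ehk(\mf p) + \varepsilon\}$ is open by hypothesis and contains $\mf p$. Since the principal opens $D_s$ form a basis of the Zariski topology, there exists $s \notin \mf p$ with $D_s \subseteq U$, and a fortiori $D_s \cap V(\mf p) \subseteq U$, which is the conclusion.

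For the converse, fix $a \in \RR$ and set $U_a = \{\mf q \mid \ehk(\mf q) < a\}$; I want to show $U_a$ is open by verifying both conditions of Nagata's criterion. For condition (1), stability under generalization, I use Theorem~\ref{qupsemi}(1): if $\mf p \subseteq \mf q$ then $f_q(\mf p) \leq f_q(\mf q)$ for every $q$, and passing to the limit gives $\ehk(\mf p) \leq \ehk(\mf q)$, so $\mf q \in U_a$ forces $\mf p \in U_a$. For condition (2), take any $\mf p \in U_a$ and set $\varepsilon = a - \ehk(\mf p) > 0$; the hypothesis gives $s \notin \mf p$ with $\ehk(\mf q) < \ehk(\mf p) + \varepsilon = a$ for all $\mf q \in D_s \cap V(\mf p)$, so $V(\mf p) \cap D_s$ is a nonempty (containing $\mf p$) subset of $U_a$. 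Nagata's criterion then yields openness of $U_a$.

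There is no real obstacle here beyond making sure the monotonicity $\ehk(\mf p) \leq \ehk(\mf q)$ along specializations is legitimately inherited from the finite-stage inequality $f_q(\mf p) \leq f_q(\mf q)$; the locally equidimensional hypothesis is exactly what is needed to apply Theorem~\ref{qupsemi}(1), and the limit passes through the inequality without issue. Everything else is a formal application of Nagata's criterion together with the fact that $\{D_s \mid s \notin \mf p\}$ is a neighborhood basis of $\mf p$ in $\Spec R$.
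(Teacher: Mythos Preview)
Your proof is correct and follows essentially the same approach as the paper: the paper observes in the paragraph preceding the proposition that Theorem~\ref{qupsemi}(1) yields $\ehk(\mf p)\le\ehk(\mf q)$ for $\mf p\subseteq\mf q$, so stability under generalization holds and Nagata's criterion reduces upper semi-continuity to exactly the condition in the proposition. You have simply written out both directions in more detail than the paper does, including the (easy) forward implication which the paper leaves implicit.
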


It is easy to show that we can restrict ourselves to domains.

\begin{proposition}\label{godomain}
Let $R$ be a locally equidimensional ring. If the Hilbert-Kunz multiplicity is upper semi-continuous 
in $R/\mf p$ for all minimal primes $\mf p$ of $R$, then 
the Hilbert-Kunz multiplicity is upper semi-continuous in $R$.
\end{proposition}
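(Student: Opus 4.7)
The plan is to reduce the general case to the domains $R/\mf p$ via the associativity formula for Hilbert-Kunz multiplicity: for any local ring $(A, \mf m)$ of dimension $d$, one has
\[
\ehk(A) \;=\; \sum_{\substack{P \text{ minimal prime of } A \\ \dim A/P = d}} \length(A_P) \cdot \ehk(A/P),
\]
a consequence of additivity of $\ehk$ on short exact sequences combined with a prime filtration of $A$ (terms with $\dim A/P < d$ contribute zero to the limit). Since $R$ is locally equidimensional, for every prime $\mf p$ the localization $R_\mf p$ is equidimensional of dimension $\hght \mf p$, so the sum runs over \emph{all} minimal primes $\mf q \subseteq \mf p$ of $R$ and gives
\[
\ehk(\mf p) \;=\; \sum_{\substack{\mf q \text{ minimal prime of } R \\ \mf q \subseteq \mf p}} \length(R_\mf q) \cdot \ehk((R/\mf q)_\mf p).
\]

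For each minimal prime $\mf q$ of $R$, I would define $g_\mf q : \Spec R \to \RR$ by $g_\mf q(\mf r) = \ehk((R/\mf q)_\mf r)$ when $\mf q \subseteq \mf r$ and $g_\mf q(\mf r) = 0$ otherwise. Under the identification $V(\mf q) \cong \Spec(R/\mf q)$, the hypothesis is exactly that the restriction of $g_\mf q$ to $V(\mf q)$ is upper semi-continuous. Because $V(\mf q)$ is closed in $\Spec R$ and $g_\mf q \geq 0$, the extension by zero is upper semi-continuous on all of $\Spec R$: for $a > 0$, the superlevel set $\{g_\mf q \geq a\}$ sits inside $V(\mf q)$ and is closed there, hence closed in $\Spec R$; for $a \leq 0$, the sublevel set $\{g_\mf q < a\}$ is empty. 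The displayed identity now reads
\[
\ehk \;=\; \sum_{\mf q \text{ minimal in } R} \length(R_\mf q) \, g_\mf q
\]
globally on $\Spec R$, a finite sum of non-negative upper semi-continuous functions with non-negative real coefficients, and such a sum is upper semi-continuous by a standard $\varepsilon/2$-argument.

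The only delicate step is the use of local equidimensionality to ensure that no minimal prime drops out of the associativity formula at any localization. Without that hypothesis, a minimal prime $\mf q$ with $\dim(R/\mf q)_\mf p < \dim R_\mf p$ would contribute zero to $\ehk(\mf p)$ yet could contribute positively at some specialization $\mf r \supsetneq \mf p$ where the dimensions align, which would break upper semi-continuity of the sum. Everything else in the argument is bookkeeping, which is what makes the reduction to the domain case clean.
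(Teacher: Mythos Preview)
Your proof is correct and uses essentially the same idea as the paper: the associativity formula for Hilbert--Kunz multiplicity together with local equidimensionality to ensure that every minimal prime contributes at every localization. The paper packages this via the Nagata-criterion reformulation (Proposition~\ref{closesing}) and an explicit $\varepsilon$-argument after inverting an element to force all minimal primes below $\mf p$, whereas your extension-by-zero and sum-of-upper-semi-continuous-functions presentation is slightly slicker, but the mathematical content is identical.
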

\begin{proof}
Given $\varepsilon$, we want to find an element $s \notin \mf p$, 
such that for any ideal $\mf q$ containing $\mf pR_s$ of $R_s$, $\ehk(\mf q) < \ehk(\mf p) + \varepsilon$.

For $i = 1\ldots n$ let $\mf p_i$  be the minimal primes of $R$.
Inverting an element, we may assume that all $\mf p_i$ are contained in $\mf p$.
By the assumption, there exist elements $s_i \notin \mf p$, such that in the corresponding subsets of $\Spec R/\mf p_i$, 
\[
\ehk(\mf qR/\mf p_i) < \ehk(\mf pR/\mf p_i) + \varepsilon/(n\length_{R_{\mf p_i}}\left (R_{\mf p_i})\right ).
\]
Now, if we invert the product $s$ of $s_i$, we obtain that 
for any ideal $\mf q$ of $R_s$ that contains $\mf p$, 
by the associativity formula for Hilbert-Kunz multiplicity,  
\begin{align*}
\ehk(\mf q) &= \sum_{i = 1}^n \ehk(\mf q R/\mf p_i)\length_{R_{\mf p_i}}\left (R_{\mf p_i} \right ) <\\ 
&<  \sum_{i = 1}^n 
\left ( \ehk (\mf pR/\mf p_i) + \frac{\varepsilon}{n\length_{R_{\mf p_i}}\left ( R_{\mf p_i} \right)}\right )
\length_{R_{\mf p_i}}\left (R_{\mf p_i} \right)
= \ehk(\mf p) + \varepsilon.
\end{align*}

\end{proof}

\begin{corollary}
Conjecture~\ref{semi} holds if and only if 
for any excellent domain $R$, prime ideal $\mf p$ of $R$, and $\varepsilon > 0$,
there exists $s \notin \mf p$ such that for all prime ideals 
$\mf q \in V(\mf p) \cap D_s$, 
\[\ehk (\mf q) <  \ehk(\mf p) + \varepsilon.\]
\end{corollary}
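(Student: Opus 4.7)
The plan is to deduce this corollary by combining the two equivalences already established in the preliminaries: Proposition~\ref{closesing} converts the statement ``$\ehk$ is upper semi-continuous on $\Spec R$'' into the desired $\varepsilon$-statement, while Proposition~\ref{godomain} allows one to pass from a locally equidimensional ring to the integral quotients by its minimal primes. The corollary is therefore essentially a formal rearrangement of these two facts, once one observes that excellent domains are locally equidimensional.

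For the forward direction, I would apply Conjecture~\ref{semi} directly to an arbitrary excellent domain $R$. Before doing so I need to record the standard observation that an excellent domain is locally equidimensional: each localization $R_\mf p$ is a domain, and its completion is equidimensional because excellence forces the formal fibers to be geometrically regular, so minimal primes of $\widehat{R_\mf p}$ contract to minimal primes of $R_\mf p$. Once this is noted, Conjecture~\ref{semi} applies to $R$, and the ``only if'' half of Proposition~\ref{closesing} immediately yields the $\varepsilon$-statement in the corollary.

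For the reverse direction, I would let $R$ be a locally equidimensional excellent ring and enumerate its minimal primes $\mf p_1,\ldots,\mf p_n$. Each quotient $R/\mf p_i$ is an excellent domain (excellence descends to quotients), so the hypothesis of the corollary together with the ``if'' half of Proposition~\ref{closesing} delivers upper semi-continuity of $\ehk$ on each $\Spec R/\mf p_i$. Invoking Proposition~\ref{godomain} then upgrades this to upper semi-continuity on $\Spec R$, which is Conjecture~\ref{semi} for $R$. The only genuine, and quite mild, obstacle is the verification that excellent domains are locally equidimensional so that Proposition~\ref{closesing} applies in the forward direction; beyond this, the argument is a direct concatenation of the two propositions.
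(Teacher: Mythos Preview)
Your proposal is correct and follows the same approach as the paper's one-line proof, which simply records that quotients of excellent rings are excellent and implicitly invokes Propositions~\ref{closesing} and~\ref{godomain}. Your worry about local equidimensionality of excellent domains is misplaced, however: any localization of a domain is a local domain and hence trivially equidimensional (there is a unique minimal prime), so the detour through completions and formal fibers is unnecessary.
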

\begin{proof}
We just note that a quotient of an excellent ring is excellent.
\end{proof}

A descent of semi-continuity over a faithfully flat extension would be extremelly useful for the proof; 
in fact, it would eliminate the need of Section~\ref{ess estimate}.
Unfortunately, there is no good relation between the Hilbert-Kunz multiplicity of 
a local ring and its arbitrary faithfully flat extension. 
So, we are able to obtain a descent statement only for extensions with regular fibers.
Still, the following lemma will be needed in the proof of our main result 
for algebras of essentially finite type over an excellent local ring. 

\begin{lemma}\label{regdescent}
Let $R$ be a ring and $f\colon R \to S$ be a faithfully flat $R$-algebra.
Moreover, suppose $f$ has regular fibers. 
Then Hilbert-Kunz multiplicity is upper semi-continuous in $S$ if and only if it is upper semi-continuous in $R$. 
\end{lemma}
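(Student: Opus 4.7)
The plan is to reduce both implications of the lemma to a single invariance identity: for each prime $\mf q \in \Spec S$ with contraction $\mf p = f^{-1}(\mf q)$, the equality $\ehk(R_\mf p) = \ehk(S_\mf q)$ holds. Granted this, the level sets match,
\[
(f^*)^{-1}\{\mf p \in \Spec R : \ehk(\mf p) < a\} = \{\mf q \in \Spec S : \ehk(\mf q) < a\},
\]
and I can conclude by playing continuity of $f^*$ against the fact that, for $f$ faithfully flat, $f^* \colon \Spec S \to \Spec R$ is a topological quotient map—surjective, with a subset $U \subseteq \Spec R$ open iff $(f^*)^{-1}(U)$ is open. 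One direction (usc in $R$ implies usc in $S$) uses continuity, the other the quotient property. If one wishes to avoid the topological quotient step, the reverse direction can alternatively be handled via Nagata's criterion (Proposition~\ref{nagcrit}), since stability of level sets under generalization is already known from Theorem~\ref{qupsemi}.

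To establish the invariance, set $A = R_\mf p$ and $B = S_\mf q$, with maximal ideals $\mf m$ and $\mf n$. The induced map $A \to B$ is flat local, and its closed fiber $B/\mf m B$ is a localization of $S \otimes_R \kappa(\mf p)$, hence regular of some dimension $t$, with $\dim B = \dim A + t$. I would lift a regular system of parameters of the fiber to $x_1,\ldots,x_t \in \mf n$, so that $\mf n = \mf m B + (x_1,\ldots,x_t)$ and $\frq{\mf n} = \frq{\mf m} B + (x_1^q,\ldots,x_t^q)$. The sequence $x_1^q,\ldots,x_t^q$ cuts out an ideal of colength $q^t$ in the regular local ring $B/\mf m B$, so it is a regular sequence there. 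By the lifting criterion for regular sequences along flat local maps, it remains regular on $\bar B := B/\frq{\mf m} B$, which is flat over the Artinian local ring $\bar A := A/\frq{\mf m}$.

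The key computation is that $\bar B/(x_1^q,\ldots,x_t^q) \cong B/\frq{\mf n}$ is flat over $\bar A$, and a Nakayama argument over $\bar A$ (valid since $\mf m \bar A$ is nilpotent) shows it is finitely generated, hence free, of rank $q^t \cdot [B/\mf n : A/\mf m]$. Comparing $A$-lengths yields
\[
\length_B(B/\frq{\mf n}) = q^t \cdot \length_A(A/\frq{\mf m}).
\]
Dividing by $q^{\dim B} = q^{\dim A + t}$ and letting $q \to \infty$ gives $\ehk(B) = \ehk(A)$.

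The main obstacle is the length identity. Its proof chains together several standard steps—preservation of flatness under the base change $A \to \bar A$, lifting of the regular sequence from the closed fiber, and freeness over the Artinian base deduced from the finite-length fiber—each routine in itself but requiring careful bookkeeping. Once the identity is in hand, the topological step (or its Nagata-criterion substitute) is straightforward.
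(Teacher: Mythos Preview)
Your approach is essentially the paper's: establish the invariance $\ehk(R_{\mf p}) = \ehk(S_{\mf q})$ for every $\mf q$ lying over $\mf p$, then match level sets and transfer openness through $f^*$. The paper simply cites Kunz \cite[Proposition~3.9]{Kunz2} for the invariance rather than reproving it, and for the descent direction it carries out an explicit ideal computation instead of invoking that a faithfully flat map is a topological quotient; your packaging via the quotient property is a clean equivalent.

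One small gap in your reproof of the invariance: the claim that $B/\frq{\mf n}$ is free over $\bar A$ of rank $q^t\,[B/\mf n : A/\mf m]$ tacitly assumes $[B/\mf n : A/\mf m] < \infty$, which the hypotheses do not guarantee. The conclusion $\length_B(B/\frq{\mf n}) = q^t\,\length_A(A/\frq{\mf m})$ is nonetheless correct and follows without that assumption: set $B' = B/(x_1^q,\ldots,x_t^q)$, which is flat over $A$ by the local criterion (the $x_i^q$ form a regular sequence on the fiber $B/\mf m B$); tensoring a composition series of $A/\frq{\mf m}$ with $B'$ then gives
\[
\length_B\bigl(B/\frq{\mf n}\bigr) \;=\; \length_A\bigl(A/\frq{\mf m}\bigr)\cdot \length_B\bigl(B'/\mf m B'\bigr) \;=\; q^t\,\length_A\bigl(A/\frq{\mf m}\bigr),
\]
with no residue-field degree appearing.
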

\begin{proof}
Let $Q$ be any prime in $S$ and let $\mf p = Q \cap R$. 
Note that $R_\mf p \to S_Q$ is faithfully flat with regular fibers, so, by a result of Kunz (\cite[Proposition~3.9]{Kunz2}),
$\ehk(R_\mf p) = \ehk(S_Q)$. Thus, under our assumption, the Hilbert-Kunz multiplicity is constant in fibers.

Suppose upper semi-continuity holds in $S$.
Let $a$ be any real number and consider the closed set 
$V(I) = \{Q \mid Q \in \Spec S,\, \ehk(Q) \geq a\}.$
The argument above tells us that for any $Q \in V(I)$ any minimal prime $(Q \cap R)S$ is also in $V(I)$.
Hence we get that $V(I) = V(JS)$ where $J = I \cap R$.

We claim that $V(J) = \{\mf p \mid \mf p \in \Spec R,\, \ehk(\mf p) \geq a \}$.
Note that $J \in \mf p$ if and only if
$JS \subseteq Q$ for any prime $Q$ in $S$ that contracts to $\mf p$, i.e.  $\ehk(\mf p) = \ehk(Q) \geq a$.

For the other direction, note that $f^*\colon \Spec S \to \Spec R$ is surjective,
so, since $\ehk$ is constant in fibers, we obtain that 
\[
\{Q \mid Q \in \Spec S, \ehk(Q) < a\} = (f^*)^{-1}\{\mf p \mid \mf p \in \Spec R, \ehk(\mf p) < a \}.
\]
Hence it is open. 
 
\end{proof}



\section{Globally uniform Hilbert-Kunz estimates for F-finite rings}\label{ffinite estimate}
In this section we essentially rebuild Tucker's uniform Hilbert-Kunz estimates from \cite{Tucker} 
in order to control the rate of convergence of the Hilbert-Kunz function on an open subset.

We will need the following facts about the Hilbert-Samuel multiplicity. 
See \cite[Proposition~11.1.10, Theorem~11.2.4, Proposition~11.2.9]{HunekeSwanson} for proofs.
\begin{proposition}\label{hs}
Let $(R, \mf m)$ be a local ring of dimension $d$, $\ul x$ be a system of parameters
and $I$ an arbitrary $\mf m$-primary ideal. 
\begin{enumerate}
\item $\length \left (R/ \ul x \right ) \geq \eh(\ul x, R)$. If $\ul x$ is a regular sequence, then equality holds.
\item (Associativity formula) $\eh (I, R) = \sum_\mf p \eh(I, R/\mf p) \length_{R_\mf p}\left (R_\mf p/IR_\mf p \right)$,
where the sum is taken over all primes $\mf p$, such that $\dim R/\mf p = \dim R$.\label{ashs}
\item For any numbers $n_1, \ldots, n_d$, $\eh((x_1^{n_1}, \ldots, x_d^{n_d}), R) = n_1\ldots n_d \eh(\ul{x}, R)$.
\end{enumerate}
\end{proposition}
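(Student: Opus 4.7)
The plan is to prove each of the three classical assertions separately, using standard techniques from multiplicity theory (which is why the paper defers the full details to Huneke--Swanson).

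For part (1), I would argue via the associated graded ring. Writing $I = (\ul{x})$, the canonical surjection $(R/I)[T_1, \ldots, T_d] \twoheadrightarrow \gr_I R$ sending $T_i$ to the image of $x_i$ in $I/I^2$ gives a bound $\length(I^n/I^{n+1}) \leq \length(R/I) \binom{n+d-1}{d-1}$ on each graded piece. Summing from $0$ to $n$ yields $\length(R/I^{n+1}) \leq \length(R/I)\binom{n+d}{d}$, and comparing the leading coefficients of these degree-$d$ polynomials in $n$ produces $\eh(\ul{x}, R) \leq \length(R/\ul{x})$. When $\ul{x}$ is a regular sequence, a theorem of Rees identifies the surjection above with an isomorphism, forcing equality in every degree and hence in the leading coefficient.

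For part (2), I would use a prime filtration $0 = M_0 \subset M_1 \subset \cdots \subset M_n = R$ with $M_i/M_{i-1} \cong R/\mf p_i$. By additivity of length on short exact sequences, $\length(R/I^nR)$ equals $\sum_i \length(R/(\mf p_i + I^n))$. Extracting the leading coefficient of the degree-$d$ polynomials on each side, and noting that summands with $\dim R/\mf p_i < d$ contribute only lower-degree terms, one obtains the sum over primes of maximal dimension. A localization argument shows each such $\mf p$ occurs in the filtration exactly $\length_{R_\mf p}(R_\mf p)$ times, yielding the associativity formula.

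For part (3), I would appeal to Serre's identification $\eh(J, R) = \chi(K_\bullet(J); R)$ for $J$ generated by a system of parameters, where $\chi$ is the Koszul Euler characteristic. The key ingredient is additivity of $\chi$ under factorization in each slot: if a slot entry factors as $y_1 y_2$ (with all elements still forming a system of parameters), then $\chi(\ldots, y_1 y_2, \ldots; R) = \chi(\ldots, y_1, \ldots; R) + \chi(\ldots, y_2, \ldots; R)$, which follows from the short exact sequence of Koszul complexes associated to multiplication by $y_1$. Iterating on $x_i^{n_i}$ gives $\eh((x_1^{n_1}, \ldots, x_d^{n_d}), R) = n_1 \cdots n_d \, \eh(\ul{x}, R)$.

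The main obstacle — really a matter of careful bookkeeping — lies in part (2): one must match the leading coefficients across the prime filtration against the local structure at each maximal-dimension prime, which requires verifying that localizing the filtration at $\mf p$ produces a filtration of $R_\mf p$ of length precisely $\length_{R_\mf p}(R_\mf p)$ whose factors are all copies of the residue field. The Koszul-characteristic additivity needed for part (3) is also slightly delicate but is a standard induction.
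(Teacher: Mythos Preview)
The paper does not actually prove this proposition: it simply records the three facts and refers the reader to Huneke--Swanson for proofs. So there is no argument in the paper to compare against, and your sketch stands on its own as a reasonable outline of the standard proofs.

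That said, there is a genuine slip in your treatment of part~(2). From a prime filtration $0 = M_0 \subset \cdots \subset M_n = R$ with $M_i/M_{i-1} \cong R/\mf p_i$ you assert that
\[
\length(R/I^n) \;=\; \sum_i \length\bigl(R/(\mf p_i + I^n)\bigr),
\]
citing additivity of length on short exact sequences. But tensoring the filtration with $R/I^n$ does \emph{not} produce short exact sequences: the induced filtration of $R/I^n$ has successive quotients $(M_i + I^n)/(M_{i-1} + I^n)$, which are quotients of $R/(\mf p_i + I^n)$, not isomorphic to them in general (because $M_i \cap I^n$ can strictly contain $I^n M_i$). You therefore only get an inequality $\leq$, not an equality. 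The standard repair is exactly the point you flag as ``careful bookkeeping'' but then understate: one invokes Artin--Rees to show $M_i \cap I^n \subseteq I^{n-c} M_i$ for large $n$, so the discrepancy is of lower order in $n$ and disappears when one extracts leading coefficients. Equivalently, one first proves that $\eh(I,-)$ is additive on short exact sequences and then applies that to the filtration. Your identification of the multiplicity of each maximal-dimension prime $\mf p$ in the filtration with $\length_{R_{\mf p}}(R_{\mf p})$ is correct. (As an aside, the factor $\length_{R_{\mf p}}(R_{\mf p}/IR_{\mf p})$ printed in the paper's statement appears to be a typo for $\length_{R_{\mf p}}(R_{\mf p})$; the applications later in the paper use the latter.)

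Your arguments for parts~(1) and~(3) are correct as sketched.
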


\begin{lemma}[Key lemma]\label{keylemma}	
Let $R$ be an excellent ring of characteristic $p>0$ and $\mf p$ a prime ideal of $R$. 
Let $M$ be a finite $R$-module. There exists a constant $C$(depending only on $M$) 
and an element $s \notin \mf p$, such that for any prime ideal $\mf q \in D_s \cap V(\mf p)$ and for all $q$,
we have
\[ \length_{R_\mf q} \left (M_\mf q/\frq{\mf q}M_\mf q\right ) \leq Cq^{\dim M_\mf q}.\]
\end{lemma}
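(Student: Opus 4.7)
The plan is to reduce via a prime filtration of $M$ to the case where $R$ is an excellent domain and $M=R$, and then to bound $\length(R_\mf q/\mf q^{[q]}R_\mf q)$ by comparing $\mf q^{[q]}$ to the $q$-th Frobenius power of a system of parameters of $R_\mf q$ whose first $d=\dim R_\mf p$ elements are chosen once and for all in a neighborhood of $\mf p$.

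First I would take a prime filtration $0=M_0\subsetneq\cdots\subsetneq M_n=M$ with $M_i/M_{i-1}\cong R/\mf p_i$. Subadditivity of length gives
\[
\length_{R_\mf q}(M_\mf q/\mf q^{[q]}M_\mf q)\leq \sum_{\mf p_i\subseteq\mf q}\length_{R_\mf q}\!\left((R/\mf p_i)_\mf q/\mf q^{[q]}(R/\mf p_i)_\mf q\right),
\]
and each nonzero summand satisfies $\dim(R/\mf p_i)_\mf q\leq\dim M_\mf q$. Hence it suffices to establish the bound for $M=R/\mf p_i$ for each $i$, and replacing $R$ by $R/\mf p_i$ we may assume $R$ is a domain and $M=R$.

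Now let $d=\dim R_\mf p$ and choose $x_1,\ldots,x_d\in\mf p$ forming a system of parameters of $R_\mf p$; then $\mf p^NR_\mf p\subseteq(x_1,\ldots,x_d)R_\mf p$ for some $N$, and after inverting a suitable $s\notin\mf p$ the containment $\mf p^N\subseteq(x_1,\ldots,x_d)$ holds in $R_s$, so replace $R$ by $R_s$. For any $\mf q\in V(\mf p)$, since $R$ is a catenary domain one has $\hght(\mf pR_\mf q)=d$, so $(x_1,\ldots,x_d)$ is part of a system of parameters of $R_\mf q$; extend it to a full system $\ul x=(x_1,\ldots,x_e)$ of $R_\mf q$ by adjoining $x_{d+1},\ldots,x_e\in\mf qR_\mf q$, where $e=\dim R_\mf q$. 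From $\ul x\subseteq\mf q$ one has $\ul x^{[q]}\subseteq\mf q^{[q]}$, so $\length(R_\mf q/\mf q^{[q]}R_\mf q)\leq\length(R_\mf q/\ul x^{[q]})$; combining the containment $(\ul x)^{e(q-1)+1}\subseteq\ul x^{[q]}$ with the graded-piece estimate $\length(R_\mf q/(\ul x)^n)\leq\binom{n+e-1}{e}\length(R_\mf q/\ul xR_\mf q)$ yields
\[
\length(R_\mf q/\mf q^{[q]}R_\mf q)\leq C_e\,q^e\,\length(R_\mf q/\ul xR_\mf q)
\]
for a constant $C_e$ depending only on $e\leq\dim R$.

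The hard part is then to bound $\length(R_\mf q/\ul xR_\mf q)$ uniformly in $\mf q$, because the elements $x_{d+1},\ldots,x_e$ are $\mf q$-dependent. My plan is to exploit $\mf p^NR_\mf q\subseteq\ul xR_\mf q$: the filtration of $R_\mf q/\ul xR_\mf q$ by powers of $\mf pR_\mf q$ has at most $N$ nontrivial steps, each a finitely generated module over $(R/\mf p)_\mf q$ annihilated by $\bar x_{d+1},\ldots,\bar x_e$, which form a system of parameters of $(R/\mf p)_\mf q$. After shrinking $D_s$ once more and invoking a generic Noether-normalization-type result for $R/\mf p$, one bounds each piece uniformly in $\mf q$; multiplying by $C_eq^e$ produces the required constant $C$.
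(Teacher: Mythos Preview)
Your reduction via a prime filtration is the same as the paper's, and the overall shape of your argument is plausible, but the last paragraph is where the content lies and it is not yet a proof. Two specific issues:

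\textbf{(1) The final bound is not justified.} You need $\length_{R_\mf q}(R_\mf q/\ul xR_\mf q)$ bounded independently of $\mf q$ and of the $\mf q$-dependent choice of $x_{d+1},\dots,x_e$. Your appeal to ``a generic Noether-normalization-type result for $R/\mf p$'' does not do this: Noether normalization produces a finite extension of a polynomial ring, which gives no control over the colength of an \emph{arbitrary} system of parameters $\bar x_{d+1},\dots,\bar x_e$ in $(R/\mf p)_\mf q$. What actually works is to first invert an element so that $R/\mf p$ is regular (using excellence), and then, for each $\mf q$, choose $x_{d+1},\dots,x_e$ so that their images generate $\mf q(R/\mf p)_\mf q$; then $(R/\mf p)_\mf q/(\bar x_{d+1},\dots,\bar x_e)$ has length~$1$, each step of your $\mf p$-adic filtration is generated by at most $\mu_i=\mu_{R/\mf p}(\mf p^i/\mf p^{i+1})$ elements, and $\length(R_\mf q/\ul x)\le\sum_{i<N}\mu_i$ is uniform. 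Without this choice the bound simply fails.

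\textbf{(2) Your constant $C_e$ depends on $e=\hght\mf q$.} The inequality $\length(R_\mf q/\ul x^{[q]})\le\binom{eq}{e}\length(R_\mf q/\ul x)$ has a coefficient of order $e^e$, so you need $\hght\mf q$ bounded on $D_s\cap V(\mf p)$. The lemma is stated for an arbitrary excellent ring, where this is not automatic.

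For comparison, the paper's proof makes the same regularity reduction on $R/\mf p$, but then takes a sharper route that avoids both problems above: it replaces $\mf q^{[q]}$ by the larger ideal $(\mf p^{tq},\ul x^{[q]})$ with $\ul x$ generating $\mf q$ modulo $\mf p$, uses generic freeness on $\gr_\mf p(S)$ to force $S/\mf p^nS$ to be Cohen--Macaulay for all $n$, and then computes the colength \emph{exactly} as a multiplicity via the associativity formula. This yields a constant $C$ independent of $\hght\mf q$ directly. Your crude power-of-the-parameter-ideal estimate can be salvaged with the fixes above, but once you localize to make $R/\mf p$ regular and pick $x_{d+1},\dots,x_e$ as generators of $\mf q$ modulo $\mf p$, you are already most of the way to the paper's argument; the paper's use of Cohen--Macaulayness then buys a cleaner constant at the cost of one application of generic freeness.
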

\begin{proof}

Assume that $M = R/P$ is a cyclic module for a prime ideal $P$.
If $\mf p$ does not contain $P$, we can invert any $s \in P \setminus \mf p$, so
$M_s = 0$ and the assertion is trivially true. Hence, assume $P \subseteq \mf p$.

First, invert an element to make $R/\mf p$ regular; this is possible since $R/\mf p$ is an excellent domain.
Let $S = R/P$, then $S/\mf pS \cong R/\mf p$ is regular too.

Consider the associated graded ring $\gr_\mf p (S) = \oplus_n \mf p^nS/\mf p^{n+1}S$. 
This is a finitely generated $S/\mf pS$-algebra, so by Generic Freeness (\cite[22.A]{Matsumura}), 
we can invert an element of $S/\mf pS$ and make it free over the regular ring $S/\mf pS$. 
It follows that $\mf p^nS/\mf p^{n+1}S$ are projective $S/\mf pS$-modules for all $n$.
Hence, by induction, using the sequences
\[
0 \to \mf p^nS/\mf p^{n+1}S \to S/\mf p^{n+1}S \to S/\mf p^n \to 0,
\]
we get that all residue rings $S/\mf p^nS$ are Cohen-Macaulay in this localization.

Let $\mf q$ be an arbitrary prime ideal in the obtained localization that contains $\mf p$.
Since $R_\mf q/\mf pR_\mf q$ is a regular local ring, 
there exists a system of parameters $\ul x$ that generates $\mf qR_\mf q$ modulo $\mf pR_\mf q$.
Suppose $\mf p$ can be generated by $t$ elements in $R$.
Since $(\mf p^{tq}, \frq {\ul x})R_\mf q \subseteq \frq{\mf q}R_\mf q$, 
\[\length_{R_\mf q} \left (S_\mf q/\frq{\mf q}S_\mf q \right ) 
\leq \length_{R_\mf q} \big (S_\mf q/\big(\mf p^{tq}, \frq{(\ul x)}\big)S_\mf q \big).\]

Since $S/\mf p^{tq}$ are Cohen-Macaulay, 
\[\length_{R_\mf q} \left(S_\mf q/\big(\mf p^{tq}, \frq{(\ul x)}\big)S_\mf q \right) = 
\eh\big(\frq{(\ul x)}, S_\mf q/\mf p^{tq}S_\mf q\big) 
= q^{\hght \mf q/\mf p}\eh\left(\ul x, S_\mf q/\mf p^{tq}S_\mf q\right).
\]
Thus, using the associativity formula, we get
\[\length_{R_\mf q} \left(S_\mf q/\big(\mf p^{tq}, \frq{(\ul x)}\big)S_\mf q \right ) = 
q^{\hght \mf q/\mf p}\eh\left(\ul x, S_\mf q/\mf pS_\mf q\right) \length_{R_\mf p} \left(S_\mf p/\mf p^{tq}S_\mf p \right)
= q^{\hght \mf q/\mf p} \length_{R_\mf p}\left(S_\mf p/\mf p^{tq}S_\mf p\right).\] 
Note, that $\eh(\ul x, S_\mf q/\mf pS_\mf q) = 1$, 
since $\ul x$ generates the maximal ideal of a regular local ring $S_\mf q/\mf pS_\mf q$. 

To finish the argument, note that 
$\length_{R_\mf p}\left (S_\mf p/\mf p^{n}S_\mf p\right) = \length_{R_\mf p}\left (R_\mf p/(P + \mf p^{n})R_\mf p \right)$ 
is a polynomial in $n$ of degree ${\hght \mf p/P}$
for all sufficiently large $n$, so, clearly, there exists a constant $D$, such that for all $q$
\[
\length_{R_\mf q}\left (S_\mf p/\mf p^{tq}S_\mf p\right ) = \length_{R_\mf q}\left(R_\mf p/(P + \mf p^{tq})R_\mf p\right)
\leq D(tq)^{\hght \mf p/P}.
\]

Thus, we obtained a bound 
\[
\length_{R_\mf q} \left(S_\mf q/\frq{\mf q}S_\mf q \right) \leq 
\length \big(S_\mf q/(\mf p^{tq}, \frq{(\ul{x})})S_\mf q\big) 
\leq q^{\hght \mf q/\mf p} D(tq)^{\hght \mf p/P}
= \big(Dt^{\hght \mf p/P}\big)q^{\hght \mf q/P} = Cq^{\dim S_\mf q}.
\]
Hence, the statement has been proved for $C = Dt^{\hght \mf p/P}$, a constant independent of $\mf q$.

By choosing a prime filtration of $M$ over $R$, we can reduce the general case to $M = R/P$. 
Namely, if $P_i$ are prime ideals appearing in the prime filtration, 
then 
\[\length_{R_\mf q} \left(M_{\mf q}/\frq{\mf q}M_{\mf q}\right) 
\leq \sum_i \length_{R_\mf q} \left((R/{P_i})_{\mf q}/\frq{\mf q}(R/{P_i})_{\mf q}\right).
\]
Since there are finitely many primes $P_i$, 
we can invert finitely many elements in order to force the claim for all $R/P_i$.
Also, note that $\dim M_\mf q$ is the maximum of $\dim R_\mf q/P_iR_\mf q$ over the primes in a prime filtration.
So, 
\[\length_{R_\mf q} \left(M_{\mf q}/\frq{\mf q}M_{\mf q}\right) 
\leq \sum_i \length_{R_\mf q} \left(R_\mf q/({P_i} + \frq{\mf q})R_{\mf q}\right)
\leq \sum_i C_i q^{\hght \mf q/P_i} \leq \left(\sum_i C_i\right) q^{\dim M_\mf q}.
\]
\end{proof}

Using a standard argument (\cite[Lemma 3.3]{Tucker} or \cite[Lemma 1.3]{Monsky}), 
we derive from the Key lemma the following result.

\begin{corollary}\label{minprimes}
Let $R$ be an excellent ring of characteristic $p>0$ and $\mf p$ be a prime ideal of $R$. 
Suppose $M$ and $N$ are finite $R$-modules such that their localizations at every minimal prime are isomorphic.
Then there exists a constant $C$ and an element $s \notin \mf p$, 
such that for any prime ideal $\mf q \in D_s \cap V(\mf p)$ and for all $q$,
we have
\[
|\length_{R_\mf q}\left(M_\mf q/\frq{\mf q}M_\mf q\right) - 
\length_{R_\mf q}\left(N_\mf q/\frq{\mf q}N_\mf q\right)| \leq Cq^{\hght \mf q - 1}.
\]
\end{corollary}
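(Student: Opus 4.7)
The strategy is to compare $M$ and $N$ through a single $R$-linear map whose kernel and cokernel are ``small'' (of strictly smaller dimension than $R$ after localization), and then invoke Lemma~\ref{keylemma}. Concretely, I plan to construct $\phi\colon M \to N$ that is an isomorphism at every minimal prime of $R$. Once this is done, both $K = \ker\phi$ and $L = \operatorname{coker}\phi$ are supported off the minimal primes of $R$, so for every $\mf q$ the ideals $\operatorname{Ann}_{R_\mf q}K_\mf q$ and $\operatorname{Ann}_{R_\mf q}L_\mf q$ contain an element avoiding every minimal prime of $R_\mf q$, which by the Krull height theorem gives $\dim K_\mf q, \dim L_\mf q \leq \hght\mf q - 1$.

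To construct $\phi$, enumerate the minimal primes $\mf p_1,\ldots,\mf p_n$ of $R$. By hypothesis each $\Hom_{R_{\mf p_i}}(M_{\mf p_i}, N_{\mf p_i})$ contains an isomorphism; since $M$ is finitely generated over the Noetherian ring $R$, $\Hom$ commutes with localization, and this isomorphism can be represented as $\beta_i/u_i$ for some $\beta_i \in \Hom_R(M, N)$ and $u_i \notin \mf p_i$. Since the $\mf p_i$ are pairwise incomparable, prime avoidance yields $c_i \in \bigcap_{j\neq i}\mf p_j \setminus \mf p_i$, and I set $\phi := \sum_i c_i\beta_i$. To verify that $\phi_{\mf p_i}$ is an isomorphism, note that $R_{\mf p_i}$ is a zero-dimensional Noetherian local ring, so $\mf p_i R_{\mf p_i}$ is nilpotent; reducing $\phi_{\mf p_i}$ modulo $\mf p_iR_{\mf p_i}$ annihilates every $c_j\beta_{j,\mf p_i}$ with $j\neq i$ and leaves $c_i\bar\beta_{i,\mf p_i}$, a unit times an isomorphism. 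Nakayama's lemma then makes $\phi_{\mf p_i}$ surjective, and since $M_{\mf p_i} \cong N_{\mf p_i}$ have equal finite length, $\phi_{\mf p_i}$ is bijective.

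Given $\phi$, split the four-term exact sequence $0 \to K \to M \to N \to L \to 0$ into
\[
0 \to K \to M \to \phi(M) \to 0, \qquad 0 \to \phi(M) \to N \to L \to 0,
\]
localize at $\mf q$, and tensor with $R_\mf q/\frq{\mf q}R_\mf q$. The elementary inequality $\length(B/IB) \leq \length(A/IA) + \length(C/IC)$ for a short exact sequence $0 \to A \to B \to C \to 0$, applied twice and combined by the triangle inequality, gives
\[
\bigl|\length_{R_\mf q}(M_\mf q/\frq{\mf q}M_\mf q) - \length_{R_\mf q}(N_\mf q/\frq{\mf q}N_\mf q)\bigr| \leq \length_{R_\mf q}(K_\mf q/\frq{\mf q}K_\mf q) + \length_{R_\mf q}(L_\mf q/\frq{\mf q}L_\mf q).
\]
Applying Lemma~\ref{keylemma} separately to $K$ and to $L$ produces constants $C_K, C_L$ and elements $s_K, s_L \notin \mf p$ such that, taking $s = s_Ks_L$, the two summands on the right are bounded by $C_K q^{\dim K_\mf q} \leq C_K q^{\hght\mf q - 1}$ and $C_L q^{\dim L_\mf q} \leq C_L q^{\hght\mf q - 1}$ on $D_s \cap V(\mf p)$, yielding the claim with $C = C_K + C_L$.

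The main obstacle is the construction of the global comparison map $\phi$: the given isomorphisms at distinct minimal primes do not glue canonically, and the combination $\sum_i c_i\beta_i$ leaves cross-term contributions from $j\neq i$ at each $\mf p_i$ that are only nilpotent rather than zero. The minimality of each $\mf p_i$---which renders $R_{\mf p_i}$ Artinian---is precisely what makes the Nakayama-plus-length argument succeed in showing that $\phi_{\mf p_i}$ remains an isomorphism despite these cross-terms.
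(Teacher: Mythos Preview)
Your argument has a genuine gap at the triangle-inequality step. From the two short exact sequences and the elementary inequality you cite, you obtain
\[
\length(M_\mf q/\frq{\mf q}M_\mf q) \leq \length(K_\mf q/\frq{\mf q}K_\mf q) + \length(\phi(M)_\mf q/\frq{\mf q}\phi(M)_\mf q),
\qquad
\length(N_\mf q/\frq{\mf q}N_\mf q) \leq \length(\phi(M)_\mf q/\frq{\mf q}\phi(M)_\mf q) + \length(L_\mf q/\frq{\mf q}L_\mf q),
\]
together with $\length(\phi(M)_\mf q/\frq{\mf q}\phi(M)_\mf q) \leq \length(M_\mf q/\frq{\mf q}M_\mf q)$ from the surjection $M \twoheadrightarrow \phi(M)$. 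These combine to give $\length(N/\frq{\mf q}N) - \length(M/\frq{\mf q}M) \leq \length(L/\frq{\mf q}L)$, but \emph{not} the reverse direction: for that you would need $\length(\phi(M)/\frq{\mf q}\phi(M)) \leq \length(N/\frq{\mf q}N)$, which fails whenever $\phi(M)/\frq{\mf q}\phi(M) \to N/\frq{\mf q}N$ has nonzero kernel (equivalently, whenever $\operatorname{Tor}_1^{R_\mf q}(L_\mf q, R_\mf q/\frq{\mf q})$ contributes). Concretely, take $R = k[[x,y,z]]$ with $k$ of characteristic $p$, $\mf p = \mf q = \mf m$, $N = R$, $M = \mf m$, and $\phi$ the inclusion; then $K = 0$, $L = k$, and one computes $\length(\mf m/\frq{\mf m}\mf m) = q^3 + 2$ while $\length(R/\frq{\mf m}) = q^3$, so the left-hand side of your claimed inequality is $2$, yet $\length(K/\frq{\mf m}K) + \length(L/\frq{\mf m}L) = 0 + 1 = 1$.

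The paper avoids this entirely by never touching a kernel: it constructs \emph{two} maps, one $N \to M$ and one $M \to N$, each with cokernel vanishing at every minimal prime, and bounds each direction of the difference by the length of the corresponding cokernel via Lemma~\ref{keylemma}. Your construction of a single $\phi$ that is a genuine isomorphism at each minimal prime is correct (and in fact stronger than what the paper uses), but it leads you into this kernel/Tor trap. The simplest repair is to follow the paper: your cokernel $L$ already handles $\length(N/\frq{\mf q}N) - \length(M/\frq{\mf q}M) \leq \length(L_\mf q/\frq{\mf q}L_\mf q)$; now rerun your construction with the roles of $M$ and $N$ swapped to get $\psi\colon N \to M$ with cokernel $L'$ vanishing at every minimal prime, and that bounds the other direction.
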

\begin{proof}
By the assumptions, we have an exact sequence
\[
N \to M \to K \to 0,
\]
where $K_P = 0$ for every minimal prime $P$.
By Lemma~\ref{keylemma}, we can find an element $s_1$
such that for some constant $C_1$ and all $\mf q \in D_{s_1} \cap V(\mf p)$
\[
\length_{R_\mf q}\left(M_\mf q/\frq{\mf q}M_\mf q\right) - 
\length_{R_\mf q}\left(N_\mf q/\frq{\mf q}N_\mf q\right) \leq 
\length_{R_\mf q}\left(K_\mf q/\frq{\mf q}K_\mf q\right) \leq C_1q^{\dim K_\mf q}.
\]
Since $K_P = 0$ for any minimal prime $P$, $\dim K_\mf q \leq \hght \mf q - 1$.

To finish the proof, we switch $M$ and $N$ in the first part of the argument,
i.e. apply it to the sequence
\[
M \to N \to L \to 0.
\]
Hence, by inverting an element $s_2$, we will get 
\[
\length_{R_\mf q}\left(N_\mf q/\frq{\mf q}M_\mf q\right) - 
\length_{R_\mf q}\left(M_\mf q/\frq{\mf q}M_\mf q\right) \leq 
\length_{R_\mf q}\left(L_\mf q/\frq{\mf q}L_\mf q\right) \leq C_2q^{\dim L_\mf q}
\leq C_2q^{\hght \mf q - 1},
\]
and the claim follows for $C = \max (C_1, C_2)$ and $s = s_1s_2$.
\end{proof}

\begin{definition}
Let $R$ be a ring of characteristic $p > 0$. 
For a prime ideal $\mf p$ of $R$, we denote 
$\alpha(\mf p) = \log_p [k(\mf p):k(\mf p)^p]$,
where $k(\mf p) = R_\mf p/\mf pR_\mf p$ is the residue field of $\mf p$.
\end{definition}

We will need the following result of Kunz (\cite[2.3]{Kunz2}).
\begin{proposition}\label{kunzdegree}
Let $R$ be F-finite and let $\mf p \subseteq \mf q$ be prime ideals.
Then $\alpha(\mf p) = \alpha(\mf q) + \hght \mf q/\mf p$.
\end{proposition}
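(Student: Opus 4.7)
The plan is to reduce to the case of an F-finite local domain and then establish $[K:K^p] = [k:k^p] \cdot p^d$ for its fraction field $K$ and residue field $k$ via an explicit $p$-basis argument.

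First I would reduce to the case where $(R, \mf m)$ is an F-finite local domain with $\mf p = 0$ and $\mf q = \mf m$. Localization preserves F-finiteness, so replace $R$ by $R_\mf q$; quotients preserve F-finiteness and do not affect residue fields or heights at primes containing the ideal being killed, so further replace $R$ by $R/\mf p$. The claim becomes $[K:K^p] = [k:k^p] \cdot p^d$ with $K = \mathrm{Frac}(R)$, $k = R/\mf m$, and $d = \dim R$.

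To prove the reduced statement, choose a system of parameters $x_1, \ldots, x_d \in \mf m$ and lifts $b_1, \ldots, b_\alpha \in R$ of a $p$-basis of $k$ over $k^p$ (where $\alpha = \alpha(\mf m)$). I would show that the $p^{\alpha + d}$ monomials $b^J x^I$ with $I, J$ componentwise less than $p$ form a $p$-basis of $K$ over $K^p$. Spanning is verified by iterative approximation: every $r \in R$ is congruent modulo $\mf m$ to a $K^p$-linear combination of the $b^J$, the error being a sum of $x_i$ times elements of $R$; iterating and passing to the completion $\hat R$ (where $\bigcap_n \mf m^n = 0$) shows that these monomials, together with $R^p$ and hence $K^p$, fill out $\hat R$ and hence $K$.

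The main obstacle will be linear independence, which I plan to handle through the rank of $F_* R$. Its generic rank equals $[K:K^p]$ since $F_*R \otimes_R K \cong F_*K$ has $K$-dimension $[K:K^p]$. On the other hand, by Cohen structure $\hat R$ is module-finite over $A := k[\![x_1, \ldots, x_d]\!]$, and $A$ is visibly free of rank $p^{\alpha + d}$ over $A^p$ on the listed monomials; a multiplicativity argument $[L:L^p] = [\mathrm{Frac}(A) : \mathrm{Frac}(A)^p]$ for finite extensions $L/\mathrm{Frac}(A)$ (using that Frobenius identifies $[L^p : \mathrm{Frac}(A)^p]$ with $[L : \mathrm{Frac}(A)]$) then pins $[K:K^p] = p^{\alpha+d}$. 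The delicate point is that $\hat R$ need not be a domain when $R$ is; one works prime-by-prime on the minimal primes of $\hat R$, using excellence of F-finite rings (so the formal fibers are geometrically regular) to transfer the rank computation back to $K$.
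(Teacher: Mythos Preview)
The paper does not give its own proof of this proposition; it simply quotes the result from Kunz \cite[Proposition~2.3]{Kunz2}. So there is no argument in the paper to compare yours against.

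Your overall strategy---reduce to an F-finite local domain, pass to the completion, use Cohen's structure theorem to realize $\hat R$ as a finite extension of $A = k[\![x_1,\ldots,x_d]\!]$, compute $[\mathrm{Frac}(A):\mathrm{Frac}(A)^p] = p^{\alpha(\mf m)+d}$, and invoke multiplicativity of the $p$-degree in finite extensions---is sound and is essentially how Kunz's result is usually proved. The handling of the case where $\hat R$ fails to be a domain via excellence of F-finite rings (so that the formal fiber over $(0)$ is geometrically regular, hence the residue fields at minimal primes of $\hat R$ are finite \emph{separable} over $K$ and thus have the same $p$-degree) is exactly the right patch.

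There is, however, a gap in your explicit spanning argument. You write that after subtracting a combination of the $b^J$ the error lies in $(x_1,\ldots,x_d)$, but a system of parameters need not generate $\mf m$ when $R$ is singular; the error lies only in $\mf m$. Iterating as you describe does not push the error into higher powers of $(x_1,\ldots,x_d)$ in any obvious way. Fortunately this part is redundant: your rank computation via Cohen structure already yields $[K:K^p] = p^{\alpha(\mf m)+d}$ directly, so you never need to exhibit an explicit $p$-basis or prove spanning by hand. I would simply delete the spanning paragraph and let the rank argument carry the whole proof.
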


\begin{theorem}\label{funiconv}
Let $R$ be an F-finite domain and let $\mf p$ be an arbitrary prime ideal.
Then there exists an element $s \notin \mf p$ such that for any $\varepsilon > 0$
there is $q_0$ such that for all $q > q_0$ 
\[ \left|\length_{R_\mf q} \left(R_\mf q/\frq{\mf q}R_\mf q\right)/q^{\hght \mf q} - \ehk(\mf q)\right| < \varepsilon\]
for all prime ideals $\mf q \in D_s \cap V(\mf p)$. 
\end{theorem}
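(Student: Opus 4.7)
The strategy follows Tucker's approach: compare $\pf R$ to a free $R$-module of the same generic rank, in order to obtain a uniform Cauchy-type estimate for the normalized Hilbert-Kunz function. Since $R$ is an F-finite domain with fraction field $K$, the $R$-module $\pf R$ has rank $[K^{1/p}:K]=p^{\alpha}$, where $\alpha=\alpha((0))$. Thus $\pf R$ and $R^{p^\alpha}$ become isomorphic after localizing at the unique minimal prime $(0)$, so Corollary~\ref{minprimes} produces $s\notin\mf p$ and a constant $C_1$ with
\[
\bigl|\length_{R_\mf q}\!\bigl(\pf R_\mf q/\frq{\mf q}\pf R_\mf q\bigr)-p^\alpha\length_{R_\mf q}\!\bigl(R_\mf q/\frq{\mf q}R_\mf q\bigr)\bigr|\le C_1\, q^{\hght\mf q-1}
\]
for every $\mf q\in D_s\cap V(\mf p)$ and every $q$.

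Next I rewrite the first term as an ordinary Hilbert-Kunz length. As an abelian group, $\pf R_\mf q/\frq{\mf q}\pf R_\mf q$ equals $R_\mf q/\mf q^{[pq]}R_\mf q$, but carries the Frobenius-twisted $R_\mf q$-module structure. Any composition series of $R_\mf q/\mf q^{[pq]}R_\mf q$ under the usual structure remains a chain under the twisted structure, and each simple quotient $k(\mf q)$ is replaced by $\pf k(\mf q)$, which has $R_\mf q$-length $[k(\mf q):k(\mf q)^p]=p^{\alpha(\mf q)}$. Hence
\[
\length_{R_\mf q}\!\bigl(\pf R_\mf q/\frq{\mf q}\pf R_\mf q\bigr)=p^{\alpha(\mf q)}\length_{R_\mf q}\!\bigl(R_\mf q/\mf q^{[pq]}R_\mf q\bigr).
\]
Proposition~\ref{kunzdegree} gives $\alpha(\mf q)=\alpha-\hght\mf q$; substituting and dividing by $p^\alpha q^{\hght\mf q}$ yields
\[
\left|\frac{\length_{R_\mf q}\!\bigl(R_\mf q/\mf q^{[pq]}R_\mf q\bigr)}{(pq)^{\hght\mf q}}-\frac{\length_{R_\mf q}\!\bigl(R_\mf q/\frq{\mf q}R_\mf q\bigr)}{q^{\hght\mf q}}\right|\le\frac{C_2}{q},
\]
with $C_2=C_1 p^{-\alpha}$ independent of $\mf q$.

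Telescoping this estimate along the sequence $q,pq,p^2 q,\ldots$ and letting the exponent tend to infinity produces
\[
\left|\ehk(\mf q)-\frac{\length_{R_\mf q}\!\bigl(R_\mf q/\frq{\mf q}R_\mf q\bigr)}{q^{\hght\mf q}}\right|\le\frac{C_2\, p}{(p-1)\,q},
\]
uniformly over $\mf q\in D_s\cap V(\mf p)$. Given $\varepsilon>0$, any $q_0$ with $C_2 p/((p-1)q_0)<\varepsilon$ gives the desired conclusion.

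The main obstacle is the first display: isolating a single element $s\notin\mf p$ for which the Key Lemma based bound of Corollary~\ref{minprimes} holds uniformly over all $\mf q\in D_s\cap V(\mf p)$. Once this is in hand the rest is algebraic bookkeeping, but it leans on two specific features of F-finite rings—the compatibility of $\pf$ with localization, and the rigid behaviour of the residue-field invariant $\alpha(\mf q)$ captured by Kunz's Proposition~\ref{kunzdegree}—neither of which survives in the merely excellent setting treated in Section~\ref{ess estimate}.
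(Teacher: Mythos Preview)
Your proof is correct and follows essentially the same approach as the paper: apply Corollary~\ref{minprimes} to the pair $\pf R \cong R^{1/p}$ and $R^{p^{\alpha(0)}}$, use Proposition~\ref{kunzdegree} to rewrite the resulting inequality as a one-step estimate between consecutive normalized Hilbert--Kunz values, and then pass to the limit. The only cosmetic difference is that the paper packages the passage to the limit as an induction on $q'$ leading to an explicit bound $|\ell(R_\mf q/\frq{\mf q}) - q^{\hght\mf q}\ehk(\mf q)| < Cq^{\hght\mf q-1}$, whereas you phrase the same computation as a telescoping sum; the resulting constants and conclusion are identical.
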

\begin{proof}
Since $R$ is F-finite, 
$R^{p^{\alpha(0)}}$ and $R^{1/p}$ are isomorphic localized at the minimal prime $0$. 
So, by Corollary~\ref{minprimes},
we can invert an element and obtain a global bound
\[
\left|\length_{R_\mf q}\left(R_\mf q^{p^{\alpha(0)}}/\frq{\mf q}R_\mf q^{p^{\alpha(0)}}\right) - 
\length_{R_\mf q}\left(R^{1/p}_\mf q/\frq{\mf q}R_\mf q^{1/p}\right)\right| 
< Cq^{\hght \mf q - 1},   
\]
for an arbitrary prime ideal $\mf q$ containing $\mf p$. 

Now, to finish the proof, we follow Tucker's argument from \cite{Tucker}. 
Proposition~\ref{kunzdegree} applied to the formula above gives
\[
\left|p^{\hght \mf q + \alpha(\mf q)}\length_{R_\mf q}\left(R_\mf q/\frq{\mf q}R_\mf q\right) - 
p^{\alpha(\mf q)}\length_{R_\mf q}\left(R_\mf q/\mf q^{[qp]}R_\mf q\right)\right| < 
Cq^{\hght \mf q - 1}, \text{so}   
\]
\begin{equation}\label{1formula}
\left|p^{\hght \mf q}\length_{R_\mf q}\left(R_\mf q/\frq{\mf q}R_\mf q\right) - 
\length_{R_\mf q}\left(R_\mf q/\mf q^{[qp]}R_\mf q\right)\right|
< p^{-\alpha(\mf q)}Cq^{\hght \mf q - 1} \leq Cq^{\hght \mf q - 1}.
\end{equation}

Now, we prove by induction on $q'$ that
\begin{equation}\label{indformula}
\left|(q')^{\hght \mf q}\length_{R_\mf q}\left(R_\mf q/\mf q^{[q]}R_\mf q\right) - 
\length_{R_\mf q}\big(R_\mf q/\mf q^{[qq']}R_\mf q\big)\right|<
C(qq'/p)^{\hght \mf q -1}\frac{q' - 1}{p - 1}. 
\end{equation}
The induction base of $q' = p$ is (\ref{1formula}). 
Now, assume that the claim holds for $q'$ and we want to prove it for $q'p$.

First, (\ref{1formula}) applied to $qq'$ gives 
\begin{equation}\label{2formula}
\left|p^{\hght \mf q}\length_{R_\mf q}\left(R_\mf q/\mf q^{[qq']}R_\mf q\right) - 
\length_{R_\mf q}\left(R_\mf q/\mf q^{[qq'p]}R_\mf q\right)\right|
< C(qq')^{\hght \mf q - 1},
\end{equation}
and, multiplying the induction hypothesis by $p^{\hght \mf q}$, we get
\begin{equation}\label{3formula}
\left|(q'p)^{\hght \mf q}\length_{R_\mf q}\Big(R_\mf q/\mf q^{[q]}R_\mf q\Big) - 
p^{\hght \mf q}\length_{R_\mf q}\left(R_\mf q/\mf q^{[qq']}R_\mf q\right)\right|
< C(qq')^{\hght \mf q -1}\frac{pq' - p}{p - 1}.
\end{equation}
Combining (\ref{2formula}) and (\ref{3formula}) results in
\[
\left|(q')^{\hght \mf q}\length_{R_\mf q}\Big(R_\mf q/\mf q^{[q]}R_\mf q\Big) - 
\length_{R_\mf q}\left(R_\mf q/\mf q^{[qq']}R_\mf q\right)\right| <
C(qq')^{\hght \mf q -1} \left(\frac{q'p - p}{p - 1} + 1\right),
\]
and the induction step follows.

Now, dividing (\ref{indformula}) by $q'^{\hght \mf q}$, we obtain
\[
\left|\length_{R_\mf q}\Big(R_\mf q/\mf q^{[q]}R_\mf q\Big) - 
\frac{1}{q'^{\hght \mf q}}\length_{R_\mf q}\left(R_\mf q/\mf q^{[qq']}R_\mf q\right)\right|<
Cq^{\hght \mf q -1}\cdot\frac{q' - 1}{p -1}\cdot\frac{1}{q'p^{\hght \mf q - 1}}
\leq Cq^{\hght \mf q - 1}.
\]

Thus, if we let $q' \to \infty$, we get that 
\[\left|\length_{R_\mf q}\left(R_\mf q/\mf q^{[q]}R_\mf q\right) - q^{\hght \mf q}\ehk(\mf q)\right| < Cq^{\hght \mf q - 1},\]
and the claim follows.

\end{proof}

\section{Uniform estimates for a flat extension}\label{ess estimate}
In this section we prove convergence estimates of Theorem~\ref{funiconv} 
for algebras of essentially finite type over a complete domain. 
To do so, we use existence of a faithfully flat F-finite extension, 
and we relativize the estimates of the previous section to use in the extension. 

\begin{lemma}\label{flat estimate}
Let $R$ be a locally equidimensional excellent ring and $S$ be an $R$-algebra.
Let $I$ be an ideal in $R$, let $M$ be an $S$-module such that $\Supp M \subseteq V(I)$, 
and $\mf p$ be a prime ideal of $R$.
Then there exists an element $s \notin \mf p$ 
and a constant $C$
such that for any prime ideal $\mf q \in V(\mf p) \cap D(s)$ 
and for any prime ideal $Q$ in $S$ minimal over $\mf qS$
\[
\length_{S_Q}\left(M_Q/\frq{\mf q}M_Q\right) \leq Cq^{\hght \mf q - \hght I}\length_{S_Q} \left(S_Q/\mf qS_Q\right).
\]
\end{lemma}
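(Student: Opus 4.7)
The plan is to imitate the proof of the Key Lemma~\ref{keylemma} at the level of the $S$-module $M$, using the Key Lemma itself as a black box for $R$-modules of the form $R/\mf p'$ (where $\mf p'$ is the contraction to $R$ of a prime appearing in a prime filtration of $M$), and then base-changing from $R_\mf q$ to $S_Q$ via a short-filtration argument.

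First, I would reduce to the cyclic case. Choose a prime filtration $0 = M_0 \subsetneq M_1 \subsetneq \cdots \subsetneq M_n = M$ of $M$ as an $S$-module; the support condition $\Supp M \subseteq V(I)$ forces each subquotient to be of the form $S/P$ with $P \supseteq IS$. By subadditivity of length on short exact sequences (applied to the right-exact functor $(-)/\frq{\mf q}(-)$), it suffices to prove the bound for each such $S/P$ separately and then collect the finitely many inverted elements and constants at the end. Write $\mf p' := P \cap R$, so $\mf p' \supseteq I$.

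Next, I split on whether $\mf p' \subseteq \mf p$. If $\mf p' \not\subseteq \mf p$, invert some $s_1 \in \mf p' \setminus \mf p$: for any $\mf q \in D(s_1) \cap V(\mf p)$ and any $Q$ minimal over $\mf q S$ with $P \subseteq Q$ (otherwise $(S/P)_Q = 0$ already), the element $s_1 \in P$ lies in $Q \cap R$, which equals $\mf q$ in the flat setting of this section, contradicting $s_1 \notin \mf q$. Hence $(S/P)_Q = 0$ and the bound is trivial. If instead $\mf p' \subseteq \mf p$, I apply the Key Lemma~\ref{keylemma} to the finite $R$-module $R/\mf p'$ and the prime $\mf p$: this produces $s_2 \notin \mf p$ and a constant $C$ such that
\[
\length_{R_\mf q}\!\left(R_\mf q/(\mf p'+\frq{\mf q})R_\mf q\right) \leq Cq^{\hght \mf q - \hght \mf p'} \leq Cq^{\hght \mf q - \hght I}
\]
for all $\mf q \in D(s_2) \cap V(\mf p)$, using equidimensionality of $R$ together with $\mf p' \supseteq I$.

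To transfer the bound to $S_Q$: for $Q$ minimal over $\mf qS$ containing $P$, the inclusion $\mf p' S \subseteq P$ yields
\[
\length_{S_Q}\!\left((S/P)_Q/\frq{\mf q}(S/P)_Q\right) \leq \length_{S_Q}\!\left(S_Q/(\mf p'+\frq{\mf q})S_Q\right),
\]
and the right-hand side equals $\length_{S_Q}\!\left(R_\mf q/(\mf p'+\frq{\mf q})R_\mf q \otimes_{R_\mf q} S_Q\right)$ (using $Q \cap R = \mf q$ to view $S_Q$ as an $R_\mf q$-algebra). Filtering the finite-length left factor by copies of $k(\mf q)$ and tensoring each short exact sequence with $S_Q$, right-exactness shows that each step contributes at most $\length_{S_Q}(S_Q/\mf q S_Q)$, yielding the required estimate. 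The main obstacle I expect is precisely this base-change step: to treat $S_Q$ as an $R_\mf q$-algebra (so that the filtration by $k(\mf q)$'s makes sense after tensoring) one needs $Q \cap R = \mf q$, which is where the flatness implicit in the section enters; the same hypothesis is what makes Case A degenerate. The remaining steps---the prime filtration, the invocation of the Key Lemma, and length subadditivity---are then routine.
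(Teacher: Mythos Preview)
Your plan works, but it is a bit more circuitous than the paper's and your remarks about flatness are off the mark. The paper makes exactly your two moves---prime filtration of $M$ to reduce to $S/P$ with $P\supseteq IS$, and then a base-change inequality---but instead of introducing $\mf p'=P\cap R$ it simply uses $I$ itself: from $IS\subseteq P$ one has
\[
\length_{S_Q}\big((S/P)_Q/\frq{\mf q}(S/P)_Q\big)\le \length_{S_Q}\big(S_Q/(I+\frq{\mf q})S_Q\big),
\]
and the right side is then bounded by $\length_{R_\mf q}\big(R_\mf q/(I+\frq{\mf q})R_\mf q\big)\cdot\length_{S_Q}(S_Q/\mf qS_Q)$ via the same filtration-by-$k(\mf q)$'s argument you sketch. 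This collapses your Case~A/B split into a single case and replaces your black-box call to the Key Lemma for $R/\mf p'$ with the identical Key-Lemma computation for $R/I$; either route yields the exponent $\hght\mf q-\hght I$ after using local equidimensionality and catenarity.

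Your flatness worries are unnecessary in both places you flag them. For the base change, you only need $S_Q$ to be an $R_\mf q$-algebra, and $\mf qS\subseteq Q$ already forces $\mf q\subseteq Q\cap R$, which is enough (you do not need $Q\cap R=\mf q$; right-exactness of $-\otimes_{R_\mf q}S_Q$ gives the inequality regardless). For your Case~A, you also do not need $Q\cap R=\mf q$: once $s_1\in\mf p'\setminus\mf q$ is inverted you have $\mf p'R_\mf q=R_\mf q$, hence its image $\mf p'S_Q=S_Q$ via the ring map $R_\mf q\to S_Q$, and since $\mf p'S\subseteq P$ this forces $PS_Q=S_Q$, i.e.\ $(S/P)_Q=0$. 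So your argument goes through without flatness, and in fact proves the lemma as stated.
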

\begin{proof}
If $I$ is not contained in $\mf p$ we can invert an element and make $M$ to be zero.
So assume $I \subseteq \mf p$.

Since $R$ is excellent, we can invert an element $s \notin \mf p$ to make 
$R/\mf p$ regular and $R/(\mf p^n + I)$ to be Cohen-Macaulay for all $n$ (see the proof of Lemma~\ref{keylemma}).
We claim that the required bound holds for this $s$.

By taking a prime filtration of $M$ we reduce the statement to $M = S/J$,
where $J$ is a prime ideal in $S$ that contains $IS$.
So
\[
\length_{S_Q}\left(S_Q/(\frq{\mf q}S + J)S_Q\right) \leq 
\length_{S_Q}\left(S_Q/(\frq{\mf q} + I)S_Q \right).
\]

By tensoring a prime filtration of $R_\mf q/(\frq{\mf q} + I)R_\mf q$ with $S_Q$, we have 
\[
\length_{S_Q}(S_Q/\left(\frq{\mf q} + I)S_Q\right) \leq 
\length_{R_\mf q}\left(R_\mf q/(\frq{\mf q} + I)R_\mf q\right) \length_{S_Q}\left(S_Q/\mf qS_Q\right).
\]

Since $R/\mf p$ is regular, we can write $\mf qR_\mf q = (\mf p + (\ul{x}))R_\mf q$,
where $\ul{x}$ are minimal generators of $\mf q/\mf p$. 
Suppose $\mf p$ can be generated by $t$ elements in $R$, hence
$\mf p^{tq} \subseteq \frq{\mf p}$. Thus
\[\length_{R_\mf q}\Big(R_\mf q/\left(\frq{\mf q} + I\right)R_\mf q\Big) = 
\length_{R_\mf q}\left(R_\mf q/\big(\frq{\mf p}+ \frq{(\ul{x})} + I\big)R_\mf q\right)
\leq 
\length_{R_\mf q}\left(R_\mf q/\big(\mf p^{tq} + \frq{(\ul{x})} + I\big)R_\mf q\right).
\]
Now, since $R/\mf q^{tq} + I$ are Cohen-Macaulay,
\[
\length_{R_\mf q}\left(R_\mf q/\left(\mf p^{tq} + I + \frq{(\ul{x})}\right)R_\mf q\right) =
\eh \big(\frq{(\ul{x})}, R_\mf q/(\mf p^{tq} + I)R_\mf q\big).
\]
Moreover, by the associativity formula,
\[
\eh \big(\frq{(\ul{x})}, R_\mf q/(\mf p^{tq} + I)R_\mf q\big)
= \eh \big(\frq{(\ul{x})}, R_\mf q/\mf pR_\mf q\big) \length_{R_\mf p} \left(R_\mf p/(\mf p^{tq} + I)R_\mf p\right),
\]
and, using that $R/\mf p$ is regular,
\[
\eh \big(\frq{(\ul{x})}, R_\mf q/\mf pR_\mf q\big) \length_{R_\mf p} \left(R_\mf p/(\mf p^{tq} + I)R_\mf p\right)
=  q^{\hght \mf q/\mf p} \length_{R_\mf p} \left(R_\mf p/(\mf p^{tq} + I)R_\mf p\right).
\]
The Hilbert-Samuel polynomial of $R_\mf p/IR_\mf p$ has degree 
$\dim R_\mf p/I = \hght \mf p - \hght I$. Hence we can find a constant $D$ such that 
\[
\length_{R_\mf p} \left(R_\mf p/(\mf p^{tq} + I)R_\mf p\right) \leq D(tq)^{\hght \mf p - \hght I} = Cq^{\hght \mf p - \hght I}
\] 
and the claim follows. 
\end{proof}

We will need the following lemma about the Gamma construction.
It is a step in the proof of \cite[Lemma 6.13]{HochsterHuneke}, 
and a more detailed exposition can be found in Hochster's notes 
(\cite[Theorem, page 139]{Hochster}).

\begin{lemma}\label{gamma}
Let $B$ be a complete local domain and $S$ be a $B$-algebra of essentially finite type.
Suppose $S$ is a domain then there exists a purely inseparable faithfully flat F-finite $B$-algebra $B^\Gamma$
such that $S \otimes_B B^\Gamma$ is a domain.
\end{lemma}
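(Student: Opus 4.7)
The plan is to construct $B^\Gamma$ via the Hochster--Huneke $\Gamma$-construction and then select the cofinite index set $\Gamma$ carefully enough to preserve the domain property after tensoring with $S$. I would begin by invoking Cohen's structure theorem to represent $B$ as a module-finite extension of a complete regular local ring $A$, namely $A = k[[x_1, \ldots, x_n]]$ in the equicharacteristic case with coefficient field $k$, or $A = V[[x_1, \ldots, x_n]]$ in the mixed characteristic case with a coefficient DVR $V$ of residue field $k$. Fix a $p$-base $\Lambda$ of $k$. For a cofinite subset $\Gamma \subseteq \Lambda$, define $k^\Gamma$ by adjoining to $k$ all $p^e$-th roots, $e \geq 1$, of each $\lambda \in \Gamma$; this makes $k \to k^\Gamma$ purely inseparable with $\Lambda \setminus \Gamma$ as a $p$-base, so $k^\Gamma$ is F-finite. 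Replace $k$ by $k^\Gamma$ in the coefficient data of $A$ (lifting to a suitable DVR $V^\Gamma$ of residue field $k^\Gamma$ in the mixed characteristic case) to obtain an F-finite complete regular local ring $A^\Gamma$, and set $B^\Gamma$ to be the $\mf m$-adic completion of $B \otimes_A A^\Gamma$.

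The structural properties of $B \to B^\Gamma$ then follow essentially uniformly in $\Gamma$. Faithful flatness of $A \to A^\Gamma$ can be checked via the power series bases surviving the extension of coefficients, and it is preserved under the module-finite base change producing $B^\Gamma$; pure inseparability (interpreted in the Hochster--Huneke sense: every element of $B^\Gamma$ admits a $p^e$-th power in $B$) is inherited from $k \to k^\Gamma$; and F-finiteness of $B^\Gamma$ reduces to F-finiteness of $A^\Gamma$, which holds because $k^\Gamma$ has the finite $p$-base $\Lambda \setminus \Gamma$, combined with the module-finite extension $A^\Gamma \to B^\Gamma$.

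The main obstacle, and the essential content of the lemma, is ensuring that $\Gamma$ can be chosen so that $S \otimes_B B^\Gamma$ is a domain. Write $K = \operatorname{Frac}(B)$, $L = \operatorname{Frac}(S)$, and $K^\Gamma = \operatorname{Frac}(B^\Gamma)$. By flatness of $B \to B^\Gamma$ and the injection $S \hookrightarrow L$, the ring $S \otimes_B B^\Gamma$ embeds into $L \otimes_K K^\Gamma$, so it suffices to show the latter is a field. Because $K \to K^\Gamma$ is radiciel, $\operatorname{Spec}(L \otimes_K K^\Gamma)$ consists of a single point, and so $L \otimes_K K^\Gamma$ is a field precisely when it is reduced. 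Nilpotents arise exactly when some $\lambda \in \Gamma$, viewed in $K$, already possesses a $p^e$-th root in $L$ matching the adjoined root $\lambda^{1/p^e} \in K^\Gamma$. Since $L/K$ is finitely generated, its separability defect is controlled by a finite-dimensional space of differentials, so only finitely many $p$-base elements of $\Lambda$ can become $p^e$-th powers in $L$; excising this finite exceptional set from $\Gamma$ while keeping $\Gamma$ cofinite forces $L \otimes_K K^\Gamma$ to be reduced, hence a field, and $S \otimes_B B^\Gamma$ to be a domain. The hard part is translating the abstract finiteness of the separability defect into a concrete finite list of problematic $p$-base elements, particularly in mixed characteristic where the residue characteristic does not coincide with the characteristic of $K$; this technical core is worked out in detail in \cite[Lemma~6.13]{HochsterHuneke} and the exposition of \cite[page~139]{Hochster}.
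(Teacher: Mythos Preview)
The paper does not provide its own proof of this lemma; it simply records that the statement is a step in the proof of \cite[Lemma~6.13]{HochsterHuneke} with a more detailed exposition in \cite[Theorem, page~139]{Hochster}. Your proposal is a correct and reasonably detailed sketch of exactly the argument found in those references, and it defers the delicate technical point (bounding the finitely many problematic $p$-base elements) to the same two sources. So your approach coincides with the paper's.

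One remark worth making: the entire paper works in prime characteristic $p>0$, so $B$ is an equicharacteristic complete local domain and Cohen's theorem gives a coefficient field $k$ with $A = k[[x_1,\ldots,x_n]]$ directly. Your mixed-characteristic digression (the DVR $V$, the lift $V^\Gamma$, and the closing sentence about the residue characteristic differing from that of $K$) is therefore irrelevant here and should be dropped; indeed the notions ``purely inseparable'' and ``F-finite'' as used in the statement do not even make sense for a mixed-characteristic $B$.
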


\begin{theorem}\label{essuniconv}
Let $B$ be a complete local domain and $R$ be a domain and a $B$-algebra of essentially finite type.
Let $\mf p$ be an arbitrary prime ideal in $R$, 
then there exists an element $s \notin \mf p$ such that for any $\varepsilon > 0$
there is $q_0$ such that for all $q > q_0$ 
\[ \left|\length_{R_\mf q} \left(R_\mf q/\frq{\mf q}R_\mf q\right)/q^{\hght \mf q} - \ehk(\mf q)\right| < \varepsilon\]
for all prime ideals $\mf q \in D_s \cap V(\mf p)$.
\end{theorem}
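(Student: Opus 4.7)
The plan is to mimic the Tucker-style argument of Theorem~\ref{funiconv} inside the Gamma-extension $S = R \otimes_B B^\Gamma$, using Lemma~\ref{flat estimate} in place of Corollary~\ref{minprimes} so that the resulting estimates descend to $R$. By Lemma~\ref{gamma}, $S$ is a faithfully flat F-finite domain over $R$. For a prime $\mf q \supseteq \mf p$ in $R$, I pick a minimal prime $Q$ of $\mf q S$ in $S$; flatness gives $\hght Q = \hght \mf q$ and $\mf q S_Q$ is $QS_Q$-primary, so faithful flatness yields
\[
\length_{R_\mf q}(R_\mf q/\frq{\mf q}R_\mf q)\cdot\lambda_Q = \length_{S_Q}(S_Q/\frq{\mf q}S_Q),\qquad \lambda_Q := \length_{S_Q}(S_Q/\mf q S_Q).
\]
Hence uniform convergence of $\length_{R_\mf q}(R_\mf q/\frq{\mf q}R_\mf q)/q^{\hght \mf q}$ on $D_s \cap V(\mf p)$ is equivalent to uniform convergence of $\length_{S_Q}(S_Q/\frq{\mf q}S_Q)/q^{\hght Q}$ to $\lambda_Q\ehk(\mf q)$, and I can work inside $S$ throughout.

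Following the structure of the proof of Theorem~\ref{funiconv}, it suffices to establish the relative one-step inequality
\[
\bigl|\,p^{\hght Q}\length_{S_Q}(S_Q/\frq{\mf q}S_Q) - \length_{S_Q}(S_Q/\mf q^{[qp]}S_Q)\,\bigr| \leq Cq^{\hght Q - 1}\lambda_Q,
\]
since the induction (\ref{indformula}) and passage to the limit then carry over verbatim once both sides are divided by $\lambda_Q$. Because $S$ is F-finite, $M := S^{p^{\alpha_S(0)}}$ and $N := S^{1/p}$ are isomorphic at the generic point of $S$, so the cokernels $K := \operatorname{coker}(N \to M)$ and $L := \operatorname{coker}(M \to N)$ are supported in positive height in $\Spec S$. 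I would then apply Lemma~\ref{flat estimate} to $K$ and $L$ to bound $\length(K_Q/\frq{\mf q}K_Q)$ and $\length(L_Q/\frq{\mf q}L_Q)$ by $Cq^{\hght \mf q - 1}\lambda_Q$, and rewrite the difference $\length(M_Q/\frq{\mf q}M_Q) - \length(N_Q/\frq{\mf q}N_Q)$ in terms of $\length(S_Q/\frq{\mf q}S_Q)$ and $\length(S_Q/\mf q^{[qp]}S_Q)$ using Proposition~\ref{kunzdegree} applied to $S$, exactly as (\ref{1formula}) is derived from Corollary~\ref{minprimes} in the proof of Theorem~\ref{funiconv}.

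The main technical obstacle is applying Lemma~\ref{flat estimate} to $K$ and $L$. The lemma requires $\Supp K \subseteq V(I)$ for an ideal $I \subseteq R$, and the useful exponent $q^{\hght \mf q - 1}$ is only obtained when $\hght I \geq 1$. While $K$ and $L$ are supported in positive height in $\Spec S$, their annihilators need not contract to nonzero ideals of $R$, so $I$ must be chosen carefully. My approach is to take $I$ to be the contraction to $R$ of the radical of $\operatorname{Ann} K$ (and similarly for $L$), arguing via Generic Freeness and the essential-finite-type hypothesis, as in the proof of Lemma~\ref{flat estimate} itself, that $I$ can be made nonzero after inverting an element of $R \setminus \mf p$ and that the resulting constant is uniform in $\mf q$. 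The borderline case $\mf p = 0$ in the domain $R$ requires particular care, since then any useful $I$ must be a nonzero ideal supported strictly away from the generic point.
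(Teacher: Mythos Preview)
Your overall strategy is exactly the paper's: pass to $S = R \otimes_B B^\Gamma$, compare $S^{p^{\alpha(0)}}$ with $S^{1/p}$, feed the cokernels into Lemma~\ref{flat estimate} to get the one-step bound with the extra factor $\lambda_Q$, divide it out, and rerun the induction of Theorem~\ref{funiconv}. The gap is precisely where you flag it, and your proposed fix does not close it. Generic Freeness lets you invert an element of the base of a finitely generated algebra; here that would produce an element of $S$, not of $R$, and in general the $S$-annihilator of a module supported in positive height in $\Spec S$ need not contract to a nonzero ideal of $R$ (take $R = k[x]$, $S = k[x,y]$, $K = S/(y)$). The reference to ``as in the proof of Lemma~\ref{flat estimate}'' does not help either: the Generic Freeness there (borrowed from Lemma~\ref{keylemma}) is applied to $\gr_{\mf p}$ over $R/\mf p$, an entirely different situation.

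What you are not using is the clause of Lemma~\ref{gamma} that $B^\Gamma$ is \emph{purely inseparable} over $B$. Then $S$ is purely inseparable over $R$, so $S \otimes_R L$ is a domain integral over the field $L$, hence itself a field, and indeed the fraction field of $S$. Consequently $S^{1/p} \otimes_R L \cong (S \otimes_R L)^{1/p}$ is a free $(S \otimes_R L)$-module of rank $p^{\alpha(0)}$, so one can choose maps $S^{1/p} \leftrightarrows S^{p^{\alpha(0)}}$ that become isomorphisms after $\otimes_R L$; equivalently there is a single nonzero $f \in R$ with $S_f^{1/p}$ free over $S_f$. Now your cokernels $K$ and $L$ are supported in $V(fS)$ and Lemma~\ref{flat estimate} applies with $I = (f)R$ of height~$1$, yielding the bound $Cq^{\hght \mf q - 1}\lambda_Q$ you want. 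This is exactly how the paper proceeds. Since $f$, and hence $s$ and $C$, do not depend on $\mf p$, your concern about the case $\mf p = 0$ also disappears.
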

\begin{proof}
We apply Lemma~\ref{gamma} to the quotient field $L$ of $R$ and obtain a $B$-algebra $B^\Gamma$.
Note that $S = R \otimes_B B^\Gamma$ is F-finite, so $S^{1/p}$ is a finitely generated $S$-module.

By Lemma~\ref{gamma}, $S \otimes_R L \cong B^\Gamma \otimes_B R \otimes_R L \cong B^\Gamma \otimes_B L$ is a domain.
Since $B^\Gamma$ is purely inseparable over $B$, $B^\Gamma \otimes_B L$ is integral over a field $L$, so it is a field.
Since taking $p$-roots commutes with localization, 
$(S)^{1/p} \otimes_R L \cong (S \otimes_R L)^{1/p}$, 
so it is a free module over the field $S \otimes_R L \cong B^\Gamma \otimes_B L$.
Hence, we can invert an element $f$ of $R$ to make $S_f^{1/p}$ a free module over $S_f$.
Since $R$ is a subring of $S$ and $S \otimes_R L$ is a field, 
$S \otimes_R L$ is the quotient field of $S$, thus, by definition, 
the rank of the free module $(S \otimes_R L)^{1/p}$ is $p^{\alpha(0)}$.

Therefore there exist maps 
\[
0 \to S^{1/p} \to S^{p^{\alpha(0)}} \to M \to 0
\]
and 
\[
0 \to S^{p^{\alpha(0)}} \to S^{1/p} \to N \to 0
\]
such that $\Supp M, \Supp N \subseteq V(fS)$.

Thus, using Lemma~\ref{flat estimate} to $M$ and $N$, we can invert an element $s$ and obtain that, for any prime $\mf q$ containing $\mf p$ and for any minimal prime $Q$ of $\mf qS$,
\[
\length_{S_Q}\left(S_Q^{p^{\alpha(0)}}/\frq{\mf q}S_Q^{p^{\alpha(0)}}\right) - 
\length_{S_Q}\left(S_Q^{1/p}/\frq{\mf q}S_Q^{1/p}\right)
\leq \length_{S_Q}\left(M_Q/\frq{\mf q}M_Q\right) 
\leq C_1q^{\hght \mf q - \hght(f)}\length_{S_Q} \left(S_Q/\mf qS_Q\right),
\]
\[
\length_{S_Q}\left(S_Q^{1/p}/\frq{\mf q}S_Q^{1/p}\right ) - 
\length_{S_Q} \left(S_Q^{p^{\alpha(0)}}/\frq{\mf q}S_Q^{p^{\alpha(0)}} \right)
\leq \length_{S_Q}\left(N_Q/\frq{\mf q}N_Q\right) \leq 
C_2q^{\hght \mf q - \hght(f)}\length_{S_Q} \left(S_Q/\mf qS_Q\right).
\]
Thus, by taking $C = \max(C_1, C_2)$ and noting that $\hght(f) = 1$, we obtain
\[
\left|\length_{S_Q}\left(S_Q^{p^{\alpha(0)}}/\frq{\mf q}S_Q^{p^{\alpha(0)}}\right) - 
\length_{S_Q}\left(S_Q^{1/p}/\frq{\mf q}S_Q^{1/p}\right)\right| 
< Cq^{\hght \mf q - 1}\length_{S_Q} \left(S_Q/\mf qS_Q\right).   
\]
So, since $\alpha (0) = \hght Q + \alpha(Q)$ by Proposition~\ref{kunzdegree},
\[
\left|p^{\hght Q + \alpha(Q)}\length_{S_Q}\left(S_Q/\frq{\mf q}S_Q\right) - 
p^{\alpha(Q)}\length_{S_Q}\left(S_Q/\mf q^{[qp]}S_Q\right)\right| < 
Cq^{\hght \mf q - 1}\length_{S_Q} \left(S_Q/\mf qS_Q\right).   
\]

Note that $S_Q$ is flat over $R_\mf q$ and $\mf qS_Q$ is $Q$-primary.
Hence for any artinian $R_\mf q$-module $M$, 
\[ 
 \length_{S_Q} \left(M \otimes_{R_{\mf q}} S_Q\right) = \length_{R_\mf q} (M) \length_{S_Q} \left(S_Q/\mf qS_Q\right).
\]
Therefore, the estimate above can be rewritten as 
\[
\left|p^{\hght Q + \alpha(Q)}\length_{R_\mf q}\left(R_\mf q/\frq{\mf q}R_\mf q\right) - 
p^{\alpha(Q)}\length_{R_\mf q}\left(R_\mf q/\mf q^{[qp]}R_\mf q\right)\right| < 
Cq^{\hght \mf q - 1}.   
\]
Since $S$ is flat $\hght Q = \hght \mf q$, so we obtain Equation~\ref{1formula} from Theorem~\ref{funiconv}:
\[
\left|p^{\hght \mf q}\length_{R_\mf q}\left(R_\mf q/\frq{\mf q}R_\mf q\right) - 
\length_{R_\mf q}\left(R_\mf q/\mf q^{[qp]}R_\mf q\right)\right| < 
Cp^{-\alpha(Q)}q^{\hght \mf q - 1} \leq Cq^{\hght \mf q - 1};   
\]
and the proof follows the argument in Theorem~\ref{funiconv}.

\end{proof}

\section{Proof of the main result and concluding remarks}\label{final}

Now, we want to finish the proof of upper semi-continuity of 
the Hilbert-Kunz multiplicity for F-finite rings and algebras of essentially finite type over an excellent local ring.
To do this, we verify the second statement of Proposition~\ref{closesing}.

\begin{theorem}\label{mainthm}
Let $R$ be a locally equidimensional ring.
Suppose that $R$ is either F-finite  or is an algebra of essentially finite type over an excellent local ring $B$. 
If $\mf p$ be a prime ideal of $R$, 
then for any $\varepsilon > 0$, there exists $s \notin \mf p$, such that 
for all prime ideals $\mf q \in D_s \cap V(\mf p)$
\[\ehk(\mf q) < \ehk(\mf p) + \varepsilon.\]
\end{theorem}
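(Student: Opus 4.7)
The plan is to verify the condition of Proposition~\ref{closesing} by combining the uniform convergence estimate of Theorem~\ref{funiconv} or Theorem~\ref{essuniconv} with Kunz's upper semi-continuity of a single Hilbert-Kunz function from Proposition~\ref{locconstant}. The strategy is to choose $q$ so large that $\ehk$ is well-approximated by $f_q$ on an open neighborhood of $\mf p$ inside $V(\mf p)$, and then use openness of $\{\mf q \mid f_q(\mf q) \leq f_q(\mf p)\}$ to shrink once more.

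First I would carry out the reductions. By Proposition~\ref{godomain}, we may assume $R$ is a domain, since the quotient of a locally equidimensional ring by a minimal prime is a domain and is therefore locally equidimensional. In the F-finite case Theorem~\ref{funiconv} then applies directly. In the essentially finite type case, after dividing $B$ by the kernel of $B \to R$ we may assume $B$ is an excellent local domain. Excellence of $B$ means $B \to \hat{B}$ is faithfully flat with geometrically regular fibers, and the base change $R \to R' := R \otimes_B \hat{B}$ inherits faithful flatness and regular fibers. By Lemma~\ref{regdescent} it suffices to establish upper semi-continuity in $R'$. Applying Proposition~\ref{godomain} once more to $R'$ and replacing $\hat{B}$ by its image in each domain quotient $R'/\mf q'$ (for $\mf q'$ a minimal prime of $R'$), we land in the setting of Theorem~\ref{essuniconv}: a domain essentially finite type over a complete local domain.

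Now fix $\mf p$ and $\varepsilon > 0$. By Theorem~\ref{funiconv} or Theorem~\ref{essuniconv} there exist $s_1 \notin \mf p$ and $q_0$ such that $|f_q(\mf q) - \ehk(\mf q)| < \varepsilon/3$ for all $\mf q \in V(\mf p) \cap D_{s_1}$ and all $q \geq q_0$. Fix any such $q$; since $\mf p \in V(\mf p) \cap D_{s_1}$ the estimate yields $f_q(\mf p) < \ehk(\mf p) + \varepsilon/3$. Applying Proposition~\ref{locconstant} with $a = f_q(\mf p)$ shows that $\{\mf q \mid f_q(\mf q) \leq f_q(\mf p)\}$ is open and contains $\mf p$, so it contains $D_{s_2}$ for some $s_2 \notin \mf p$. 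Setting $s = s_1 s_2$, for every $\mf q \in V(\mf p) \cap D_s$ we obtain
\[
\ehk(\mf q) < f_q(\mf q) + \varepsilon/3 \leq f_q(\mf p) + \varepsilon/3 < \ehk(\mf p) + \tfrac{2}{3}\varepsilon < \ehk(\mf p) + \varepsilon,
\]
which is the desired inequality.

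The main obstacle is bookkeeping rather than conceptual: one has to check that the reductions in the essentially-finite-type case (completion, taking minimal primes of $R'$, restricting $\hat{B}$ to its domain image) are compatible with local equidimensionality, so that Proposition~\ref{godomain} and Lemma~\ref{regdescent} apply where needed. Once the uniform convergence estimates are in hand, the actual passage from semi-continuity of the approximating functions $f_q$ to semi-continuity of the limit $\ehk$ is essentially formal.
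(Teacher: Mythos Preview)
Your proposal is correct and follows essentially the same route as the paper: reduce to a domain via Proposition~\ref{godomain}, in the non-F-finite case pass to $R \otimes_B \hat{B}$ via Lemma~\ref{regdescent} (the paper does this reduction first rather than second, but the difference is immaterial), and then combine the uniform estimate of Theorem~\ref{funiconv}/\ref{essuniconv} with Proposition~\ref{locconstant} exactly as you do. The only cosmetic difference is that the paper uses the equality $f_{q_0}(\mf q) = f_{q_0}(\mf p)$ on a neighborhood in $V(\mf p)$ (which follows from Proposition~\ref{locconstant} plus the monotonicity in Theorem~\ref{qupsemi}(1)) and $\varepsilon/2$, whereas you use only the inequality $f_q(\mf q) \leq f_q(\mf p)$ and $\varepsilon/3$.
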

\begin{proof}
If $R$ is not $F$-finite, first, consider extension $R \to R \otimes_B \hat{B}$.
Since $B$ is excellent, the natural map $B \to \hat{B}$ is regular. 
So, by \cite[Lemma 4, p. 253]{Matsumura}, $R \to R \otimes_B \hat{B}$ 
satisfies the conditions of Lemma~\ref{regdescent}. 
Hence, by Proposition~\ref{closesing} and Lemma~\ref{regdescent}, we assume that $B$ is complete.

Note that the classes of rings that we consider are stable under taking quotients.
So, by Proposition~\ref{godomain}, we can assume that $R$ is a domain.

By Theorem~\ref{funiconv} and Theorem~\ref{essuniconv}, 
there exists an element $s \notin p$ and a fixed power $q_0 = p^e$,
such that for all $\mf q \in D_s \cap V(\mf p)$
\[ \left|\length_{R_\mf q} \left(R_\mf q/\mf q^{[q_0]}R_\mf q\right)/q_0^{\hght \mf q} - \ehk(\mf q)\right| < \varepsilon/2.\]
In particular,
\[ \left|\length_{R_\mf p} \left(R_\mf p/\mf p^{[q_0]}R_\mf p\right)/q_0^{\hght \mf p} - \ehk(\mf p)\right| < \varepsilon/2.\]

Now, we can use Proposition~\ref{locconstant}, 
and obtain a non-empty subset $\mf p \in U \subseteq V(\mf p)$ open in $V(\mf p)$
such that for any $\mf q \in U$, 
\[
\length_{R_\mf q} \left(R_\mf q/\mf q^{[q_0]}R_\mf q\right)/q_0^{\hght \mf q} = f_{q_0} (\mf q) = 
f_{q_0} (\mf p) = \length_{R_\mf p} \left(R_\mf p/\mf p^{[q_0]}R_\mf p\right)/q_0^{\hght \mf p}.
\]
For completeness we are giving a construction of such $U$ below.

Since $R/\mf p^{[q_0]}$ is excellent, its Cohen-Macaulay locus is open (\cite[7.8.3(iv)]{EGA}).
Thus we can find an open subset $U \subseteq V(\mf p)$ containing $\mf p$
such that for any $\mf q \in U$, $(R/\mf p^{[q_0]})_\mf q$ is Cohen-Macaulay 
and $(R/\mf p)_\mf q$ is regular.

Let $\mf q$ be an arbitrary prime in $U$. 
Since $R_\mf q/\mf pR_\mf q$ is regular, $\mf qR_\mf q$ is generated by a regular sequence $\ul{x}$ modulo $\mf pR_\mf q$.
Then, by the associativity formula,
\[ \length_{R_\mf q} \left(R_\mf q/\mf q^{[q_0]}R_\mf q\right)
 = \length_{R_\mf q} \left(R_\mf q/(\mf p^{[q_0]}, (\ul{x})^{[q_0]})R_\mf q\right)
 = \eh\big((\ul{x})^{[q_0]}, R_\mf q/\mf p^{[q_0]}\big) 
= q_0^{\hght \mf q/\mf p}\length_{R_\mf p} \left(R_\mf p/\mf p^{[q_0]}R_\mf p\right).
\]

Thus, we obtain that on $U \cap D_s$, 
$\length_{R_\mf p} \left(R_\mf p/\mf p^{[q_0]}R_\mf p\right)/q_0^{\hght \mf p}$
is within $\varepsilon/2$ from both $\ehk(\mf p)$ and $\ehk(\mf q)$ and the statement follows. 
\end{proof}

\begin{corollary}
Let $R$ be a locally equidimensional ring.
Moreover, suppose that either $R$ is F-finite  or is an algebra of essentially finite type over an excellent local ring $B$.
Then the Hilbert-Kunz multiplicity is upper semi-continuous on $\Spec R$.
\end{corollary}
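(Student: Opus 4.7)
The plan is to reduce to the reformulation in Proposition~\ref{closesing} and then directly invoke Theorem~\ref{mainthm}. Equivalently, I would verify Nagata's criterion (Proposition~\ref{nagcrit}) for each level set
\[
U_a = \{\mf p \in \Spec R \mid \ehk(\mf p) < a\}, \quad a \in \RR.
\]

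First, I would check that $U_a$ is stable under generalization: if $\mf p \subseteq \mf q$ and $\mf q \in U_a$, Theorem~\ref{qupsemi}(1) gives $f_q(\mf p) \leq f_q(\mf q)$ for every $q$, and passing to the limit yields $\ehk(\mf p) \leq \ehk(\mf q) < a$, so $\mf p \in U_a$. This is already recorded in the discussion preceding Proposition~\ref{closesing}.

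Second, I would produce a non-empty open subset of $V(\mf p)$ inside $U_a$ for each $\mf p \in U_a$. Setting $\varepsilon = a - \ehk(\mf p) > 0$, Theorem~\ref{mainthm} supplies $s \notin \mf p$ such that $\ehk(\mf q) < \ehk(\mf p) + \varepsilon = a$ for every $\mf q \in D_s \cap V(\mf p)$; thus $D_s \cap V(\mf p)$ is the required non-empty open subset of $V(\mf p)$ contained in $U_a$. Both hypotheses of Nagata's criterion being satisfied, $U_a$ is open.

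There is essentially no obstacle left at this stage: all of the substantive work was carried out in Theorem~\ref{mainthm}, and the only remaining point is the purely formal translation from the pointwise estimate ``$\ehk(\mf q) < \ehk(\mf p) + \varepsilon$ on $D_s \cap V(\mf p)$'' into a topological openness statement, which is precisely what Proposition~\ref{closesing} (built on Nagata's criterion) provides. Thus the corollary is an immediate rephrasing of Theorem~\ref{mainthm}.
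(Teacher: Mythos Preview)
Your proposal is correct and matches the paper's approach: the paper states the corollary with no proof, since Proposition~\ref{closesing} says precisely that upper semi-continuity is equivalent to the $\varepsilon$-$s$ condition established in Theorem~\ref{mainthm}. Your argument simply unpacks Proposition~\ref{closesing} via Nagata's criterion, which is exactly how that proposition was obtained in the first place.
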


We note the following corollary of semi-continuity.

\begin{corollary}
Let $R$ be a Noetherian ring and suppose the Hilbert-Kunz multiplicity is upper semi-continuous on $\Spec R$.
Then the Hilbert-Kunz multiplicity satisfies the ascending chain condition on $\Spec R$, i.e.
any increasing sequence $e_1 = \ehk(\mf p_1) \leq e_2 = \ehk(\mf p_2) \leq \ldots$ stabilizes. 

In particular, the Hilbert-Kunz multiplicity attains its maximum on $\Spec R$.
\end{corollary}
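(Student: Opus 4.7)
The plan is to translate the ascending chain condition on values of $\ehk$ into a descending chain condition on closed subsets of $\Spec R$, where it becomes a consequence of $R$ being Noetherian.

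First I would observe that upper semi-continuity of $\ehk$ means precisely that for every real number $a$ the sublevel set $\{\mf p : \ehk(\mf p) < a\}$ is open, equivalently the superlevel set $V_a := \{\mf p \in \Spec R : \ehk(\mf p) \geq a\}$ is closed. Given an increasing sequence $e_n = \ehk(\mf p_n)$, I would form the descending chain of closed subsets $V_{e_1} \supseteq V_{e_2} \supseteq \cdots$, each nonempty since $\mf p_n \in V_{e_n}$.

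Since $R$ is Noetherian, the topological space $\Spec R$ is Noetherian, so this chain of closed sets stabilizes: there exists $N$ with $V_{e_n} = V_{e_N}$ for all $n \geq N$. Using $\mf p_N \in V_{e_N} = V_{e_n}$ gives $\ehk(\mf p_N) \geq e_n$, i.e., $e_N \geq e_n$; combined with the standing inequality $e_n \geq e_N$ from the hypothesis, I conclude $e_n = e_N$ for all $n \geq N$, which is the ACC.

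For the ``in particular'' statement, let $M = \sup\{\ehk(\mf p) : \mf p \in \Spec R\} \in \RR \cup \{\infty\}$ and pick primes $\mf q_n$ with $\ehk(\mf q_n) \to M$; passing to a subsequence, I may assume the values are non-decreasing. The ACC just proved forces $\ehk(\mf q_n)$ to stabilize at $\ehk(\mf q_N)$ for large $n$, so $M = \ehk(\mf q_N)$ is both finite and attained at $\mf q_N$. No step here presents a real obstacle, since upper semi-continuity already converts the value-theoretic claim into a statement about closed subsets and the rest is a direct application of Noetherianity of $\Spec R$; the only mild care needed is allowing the a priori possibility $M = \infty$ when selecting the approximating sequence, which is then ruled out by the argument itself.
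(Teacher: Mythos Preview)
Your proof is correct and follows essentially the same route as the paper: the paper phrases the argument via the ascending chain of open sublevel sets $U_i=\{\mf p:\ehk(\mf p)<e_i\}$, which is simply the complement of your descending chain of closed superlevel sets $V_{e_i}$, and then uses Noetherianity of $\Spec R$ in the same way. Your write-up is in fact more explicit than the paper's one-line proof, since you spell out why stabilization of the chain forces $e_n=e_N$ and you supply the argument for the ``in particular'' clause, which the paper leaves to the reader.
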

\begin{proof}
Since $\ehk$ is upper semi-continuous 
$U_i = \{\mf p \mid \ehk(\mf p) < e_i\}$ form an increasing sequence of open sets, so it stabilizes.
\end{proof}

\begin{remark}\label{F-signature remark}
In \cite{Tucker}, Kevin Tucker asked if F-signature is lower semi-continuous in F-finite rings. 
One could hope that the ideas of this paper are extendable for F-signature, 
but, at the present moment, we know nothing about the convergence rate of the F-signature of a local ring.

In fact, one could even ask if the splitting numbers  can be written as
\[a_e = r_Fq^{h} + O(q^{h - 1}),\]
where $r_F$ is the $F$-splitting ratio, $h = \alpha(R) + \dim(R/P)$, and $P$ is the splitting prime of $R$, 
see \cite{Tucker} for more details.
\end{remark}

\begin{remark}
We proved Conjecture~\ref{semi} for the F-finite case and algebras over an excellent local ring 
and want to discuss further difficulties.
At the present moment, the author does not see any way to prove the conjecture in full generality, 
for an arbitrary excellent ring. 

The problem stems from the known proof of existence of the Hilbert-Kunz multiplicity,
both the original paper (\cite{Monsky}) and its refinement (\cite{Tucker}) prove
existence of the limit for a local ring by reducing to a faithfully flat F-finite extension obtained by 
extending the residue field. 
Thus, there is not much connection between these objects for different localizations,
so the results and methods of the present paper cannot be applied.

Furthermore, it is not enough to have a global faithfully flat F-finite extension; 
we needed to use the Gamma construction in order to have an extension that has suitable properties.
\end{remark}

\specialsection*{Acknowledgements}
The author would like to thank Craig Huneke for suggesting the problem and continuous support,
Kevin Tucker for suggestions and discussion of the results, and Ian Aberbach for comments.

\bibliographystyle{plain}
\bibliography{semibib}

\end{document}